\documentclass[12pt, reqno]{amsart}
\usepackage[dvipsnames]{xcolor}
\usepackage{mathtools}
\mathtoolsset{showonlyrefs}
\usepackage{etoolbox}

\usepackage{lipsum} 
\usepackage{tikz,xcolor}

\definecolor{lime}{HTML}{A6CE39}
\DeclareRobustCommand{\orcidicon}{
	\begin{tikzpicture}
	\draw[lime, fill=lime] (0,0) 
	circle [radius=0.16] 
	node[white] {{\fontfamily{qag}\selectfont \tiny ID}};
	\draw[white, fill=white] (-0.0625,0.095) 
	circle [radius=0.007];
	\end{tikzpicture}
	\hspace{-2mm}
}
\foreach \x in {A, B}{\expandafter\xdef\csname orcid\x\endcsname{\noexpand\href{https://orcid.org/\csname orcidauthor\x\endcsname}
			{\noexpand\orcidicon}}
}
\usepackage{blindtext}
\usepackage{latexsym}
\usepackage{txfonts}
\usepackage{lmodern}
\usepackage[pagewise]{lineno}
\usepackage[utf8]{inputenc}
\usepackage{exscale}
\usepackage{amsmath}
\usepackage{amssymb}
\usepackage{amsfonts}
\usepackage{mathrsfs}
\usepackage{amsbsy}
\usepackage{relsize}
\usepackage{comment}
\PassOptionsToPackage{reqno}{amsmath}
\usepackage{esint} 
\usepackage{amssymb} 
\usepackage{stmaryrd}
\usepackage{cite} 

\usepackage{tikz}

\usepackage[a4paper,left=30mm,right=30mm,top=30mm,bottom=30mm,marginpar=20mm]{geometry}
\everymath{\displaystyle}
\usepackage{mathtools}
\DeclareMathOperator*{\esssup}{ess\,sup}

\usepackage{soul}
\usepackage{color}


\usepackage{slashed}
\PassOptionsToPackage{hidelinks}{hyperref}

\definecolor{mypink1}{rgb}{0.858, 0.188, 0.478}
\definecolor{mypink2}{RGB}{219, 48, 122}
\definecolor{mypink3}{cmyk}{0, 0.7808, 0.4429, 0.1412}
\definecolor{mygray}{gray}{0.6}

\definecolor{gris75}{gray}{0.25}
\definecolor{violet}{rgb}{0.5,0,0.5}

\definecolor{BrickRed}{rgb}{0.58, 0.0, 0.83}

\definecolor{armygreen}{rgb}{0.29, 0.33, 0.13}

\definecolor{brass}{rgb}{0.71, 0.65, 0.26}
\definecolor{antiquefuchsia}{rgb}{0.57, 0.36, 0.51}
\definecolor{amethyst}{rgb}{0.6, 0.4, 0.8}

\definecolor{mauvetaupe}{rgb}{0.57, 0.37, 0.43}
\definecolor{mypink1}{rgb}{0.858, 0.188, 0.478}
\definecolor{mypink2}{RGB}{219, 48, 122}
\definecolor{mypink3}{cmyk}{0, 0.7808, 0.4429, 0.1412}
\definecolor{mygray}{gray}{0.6}
\definecolor{venetianred}{rgb}{0.78, 0.03, 0.08}
\definecolor{sapphire}{rgb}{0.03, 0.15, 0.4}
\definecolor{utahcrimson}{rgb}{0.83, 0.0, 0.25}
\definecolor{trueblue}{rgb}{0.0, 0.45, 0.81}
\definecolor{carminered}{rgb}{1.0, 0.0, 0.22}
\definecolor{cobalt}{rgb}{0.0, 0.28, 0.67}
\definecolor{cornflowerblue}{rgb}{0.39, 0.58, 0.93}
\usepackage[colorlinks, linkcolor=cornflowerblue, citecolor=blue, urlcolor=blue, hypertexnames=false]{hyperref}


\newtheorem{theorem}{Theorem}[section]
\newtheorem{lemma}[theorem]{Lemma}

\newtheorem{proposition}[theorem]{Proposition}
\newtheorem{remark}[theorem]{Remark}



\numberwithin{equation}{section}

\numberwithin{bbb}{section}

\def\R{\mathbb{R}}

\title[Inhomogeneous parabolic equation]{An inhomogeneous nonlinear parabolic equation with concave nonlinearity}
\author[Miguel Loayza]{Miguel Loayza\orcidA{}}
\address{\small Departamento de Matem\'atica, Universidade Federal de Pernambuco - UFPE, Recife, PE, Brazil}
\email{\textcolor{blue}{\it miguel@dmat.ufpe.br}}
\author[Mohamed Majdoub]{Mohamed Majdoub\orcidB{}}
\address{\small Department of Mathematics, College of Science, IAU, P.O. Box 1982, Dammam, Saudi Arabia.\newline Basic and Applied Scientific Research Center, IAU, P.O. Box 1982, 31441, Dammam, Saudi Arabia.}
\email{\textcolor{blue}{\it mmajdoub@iau.edu.sa}}
\email{\textcolor{blue}{\it med.majdoub@gmail.com}}





\begin{document}

\begin{abstract}
We establish both the existence and uniqueness of non-negative global solutions for the nonlinear heat equation
$u_t-\Delta u=|x|^{-\gamma}\,u^q$, $0<q<1$, $\gamma>0$ in the whole space $\mathbb{R}^N$,  and for {non-negative} initial data $u_0\in C_0(\mathbb{R}^N)$.
\end{abstract}
\subjclass[2020]{35K05, 35K15, 35D30, 35R05, 35A02, 35A01}
\keywords{Nonlinear heat equation, inhomogeneous concave non-linearity, local existence, uniqueness, singular data.}
\thanks{\em Miguel Loayza was partially supported by CAPES-PRINT, 88881.311964/2018-01, MATHAMSUD, 88881.520205/2020-01, CNPq - 313382/2023-9.}
\maketitle

\section{Introduction}
Consider the singular nonlinear parabolic problem
\begin{equation}\label{In.uno}
\left \{ \begin{array}{rl}
u_t-\Delta u&= |x|^{-\gamma}\, u^q\quad  \mbox{ in }\quad  \mathbb{R}^N \times (0,{\infty}),\\
u({x,0})&=u_0{(x)}\geq 0 \quad\mbox{ in }\quad \mathbb{R}^N,
\end{array}\right.
\end{equation}
where $0<q<1$, $\gamma>0$, and $u_0\in C_0(\mathbb{R}^N)$, where $C_0(\mathbb{R}^N)$ represents the closure in $L^\infty(\mathbb{R}^N)$ of $\mathcal{D}(\mathbb{R}^N)$, the set of infinitely differentiable functions with compact support in $\mathbb{R}^N$. Our main interest in this paper is to analyze the existence and uniqueness of nonnegative global solutions of \eqref{In.uno} in the class $L^\infty (\mathbb{R}^N \times (0,T))$.

{The initial value problem \eqref{In.uno} has attracted considerable attention in the mathematical community. The case $\gamma = 0$ was studied in \cite{AE87}. When $|x|^{-\gamma}$ is replaced by a function $\mathbf{a}(x)$ on bounded domains, the problem was addressed in the pioneering work \cite{Watanabe} for continuous $\mathbf{a}$, and in \cite{Loa2006} for $\mathbf{a}$ belonging to Lebesgue spaces.}

{It is worth noting that extensive research has been conducted on the nonlinear heat equation with convex nonlinearities (i.e., $q > 1$). The monographs \cite{Hu, QS} provide a comprehensive overview of the most well-established results in this area. For further reading, see \cite{BC, CIT, CITT, SW, Tayachi, Weissler} and the references therein.}

{The existence and uniqueness of solutions for problem \eqref{In.uno}, with $\gamma > 0$, a more general nonlinearity than $u^q$, and nonnegative initial data in $L^r(\mathbb{R}^N)$ and $L_{ul}^r(\mathbb{R}^N)$, have been studied in \cite{R2, R3}. Here, $L_{ul}^r(\mathbb{R}^N)$ denotes the uniformly local Lebesgue space. However, the uniqueness results obtained in these works cannot be directly applied to problem \eqref{In.uno}. The primary difficulty in establishing uniqueness for the solutions of \eqref{In.uno} lies in the fact that the nonlinearity $u^q$ is not a Lipschitz function. This issue was resolved for the case $\gamma = 0$ in \cite{AE87}, where it was shown that every positive solution $u$ of problem \eqref{In.uno} satisfies the following lower estimate:}\begin{equation}
\label{In.b}
u(t)\geq [(1-q)t]^{1/(1-q)}=\underline{u}(t).
\end{equation}
Moreover, as is easily verified, $\underline{u}$ is a global solution of problem \eqref{In.uno} with $u_0=0$ and $\gamma=0$. The second difficulty is the presence of the singular weight $\mathbf{a}(x)=|x|^{-\gamma}$ with $\gamma>0$. In this case, $\underline{u}$ does not satisfy (\ref{In.uno}), necessitating a new lower-bound estimate. The forthcoming Theorem \ref{UN} furnishes the subsequent lower-bound estimate
$$u{(x,t)}\geq [(1-q) \eta_0]^{1/(1-q)} t^{1/(1-q)} (|x|+\sqrt{t})^{-\gamma/(1-q)}=:w(x,t),$$
where $\eta_0$ is the constant given by \eqref{eta-0} below. Clearly, when $\gamma=0$, we obtain $w(x,t)=\underline{u}(t)$. Although $w$ does not constitute a solution to the the problem (\ref{In.uno}), it does serve as a subsolution with initial value $u_0=0$, as shown in Lemma \ref{Lema.subs} below.

As is a common practice, we analyze \eqref{In.uno} through the associated integral equation
\begin{equation}
\label{integral}
u(t)= {\mathbf S}(t)u_{0}+\displaystyle\int_{0}^{t}\,{\mathbf S}_{\gamma}(t-\sigma)\,u^q(\sigma)\,d\sigma,
\end{equation}
where $\{{\mathbf S}(t)\}_{t\geq 0}$ denotes the heat semigroup and ${\mathbf S}_{\gamma}(t)={\mathbf S}(t)|\cdot|^{-\gamma}$. We say that $u\in C([0,\infty); C_0 (\mathbb{R}^N))$ is a non-negative solution to problem (\ref{In.uno}) if the integral equation (\ref{integral}) is verified for every $t\in [0,T]$ with $0<T<\infty$. This solution is said to be  positive if additionally  $u>0$ in $\mathbb{R}^N \times (0,\infty)$. 
Henceforth, we adopt the notation 
\begin{equation}\label{G.N}
\gamma_N=\min\{2,N\},    
\end{equation}
which will be employed frequently. We now proceed to present our first result as follows.
\begin{theorem}
\label{GE}
Let $u_0 \in C_0(\mathbb{R}^N), u_0\geq 0$,   $0<q<1$ and $0<\gamma<\gamma_N$. Then, there exists a non-negative solution for problem (\ref{In.uno}).
\end{theorem}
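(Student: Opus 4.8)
My plan is to construct a global non-negative solution by monotone approximation from below, using the sub-solution $w(x,t)$ advertised in the introduction as the anchor. The point of working with $w$ (rather than with the trivial lower bound $0$ or the homogeneous $\underline{u}$) is that it is strictly positive for $t>0$, so near the positivity set of the iterates the nonlinearity $s\mapsto s^q$ is Lipschitz — that is the device that replaces the missing global Lipschitz estimate.

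\medskip

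\textbf{Step 1: Set up the monotone iteration.} Define $u_1(t) = {\mathbf S}(t)u_0 + \int_0^t {\mathbf S}_\gamma(t-\sigma)\, w^q(\sigma)\, d\sigma$, where $w$ is the sub-solution from Lemma \ref{Lema.subs}, and then iteratively
\[
u_{n+1}(t) = {\mathbf S}(t)u_0 + \int_0^t {\mathbf S}_\gamma(t-\sigma)\, u_n^q(\sigma)\, d\sigma.
\]
First I would check that each $u_n$ is well defined in $C([0,\infty);C_0(\mathbb{R}^N))$: this requires the smoothing estimate for ${\mathbf S}_\gamma(t)$ acting on bounded functions, namely $\|{\mathbf S}_\gamma(t)\phi\|_\infty \le C\, t^{-\gamma/2}\|\phi\|_\infty$ for $0<\gamma<\gamma_N$ (this is exactly where the restriction $\gamma<\gamma_N=\min\{2,N\}$ enters: it guarantees $|x|^{-\gamma}$ is locally integrable against the Gaussian and that the resulting time singularity $t^{-\gamma/2}$ is integrable). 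Using this and $0<q<1$ one gets a uniform bound $\|u_n(t)\|_\infty \le M(T)$ on $[0,T]$ via a Gronwall/fixed-point-of-the-bound argument, together with the lower bound $u_n(x,t)\ge w(x,t)$ for all $n$ — the latter by induction, comparing $u_{n+1}$ against $w$ using that $w$ is a sub-solution and the monotonicity of the integral operator in the ordering of functions. Simultaneously one shows $w\le u_1\le u_2\le\cdots$: monotonicity is preserved because $s\mapsto s^q$ is increasing and ${\mathbf S}(t)$, ${\mathbf S}_\gamma(t)$ are positivity preserving.

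\medskip

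\textbf{Step 2: Pass to the limit.} The sequence $u_n(x,t)$ is monotone increasing and bounded above by $M(T)$, so $u(x,t):=\lim_n u_n(x,t)$ exists pointwise, with $w\le u\le M(T)$. By monotone (or dominated) convergence applied inside the integral — the kernel ${\mathbf S}_\gamma(t-\sigma)$ is a positive $L^1$-in-space operator and $\sigma\mapsto \|u_n^q(\sigma)\|_\infty$ is dominated — $u$ satisfies the integral equation \eqref{integral}. The remaining regularity claim, $u\in C([0,\infty);C_0(\mathbb{R}^N))$, follows from the standard parabolic-smoothing continuity estimates for the two terms on the right of \eqref{integral}: ${\mathbf S}(t)u_0$ is continuous with values in $C_0$ since $u_0\in C_0$, and the Duhamel term is continuous because $\int_0^t (t-\sigma)^{-\gamma/2}\,d\sigma<\infty$ and the integrand is bounded; one also checks the decay at infinity (the $C_0$ membership) is inherited at each fixed $t$.

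\medskip

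\textbf{Main obstacle.} The delicate point is not the abstract iteration but controlling the singular convolution ${\mathbf S}_\gamma(t)$: one must show that ${\mathbf S}(t)\big[|x|^{-\gamma}\phi\big]$ maps $L^\infty$ into $C_0$ with the quantitative bound $t^{-\gamma/2}$, uniformly, and that the resulting Duhamel integral stays in $C_0(\mathbb{R}^N)$ for each $t>0$ and depends continuously on $t$ down to $t=0$. This is where $0<\gamma<\gamma_N$ is essential and cannot be relaxed. Everything else — monotonicity, the uniform a priori bound, the lower bound by $w$, and the limit passage — is routine once the sub-solution property of $w$ (Lemma \ref{Lema.subs}) and the smoothing estimate are in hand. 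Note this argument produces \emph{a} solution; uniqueness and positivity are addressed separately (via Theorem \ref{UN}).
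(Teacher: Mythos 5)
Your proposal is correct, but it takes a genuinely different route from the paper. The paper does not iterate on the integral equation directly: it first proves a full well-posedness result (Lemma \ref{Lip}) for the regularized problems $u_t-\Delta u=|x|^{-\gamma}g_n(u)$, where $g_n$ is the Lipschitz truncation of $r\mapsto r^q$ from Lemma \ref{app}, with initial data $u_0+1/n$; since $u_n\geq \mathbf{S}(t)[1/n]=1/n\geq 1/(2n)$, each $u_n$ in fact solves the original (untruncated) integral equation, and the comparison principle from Lemma \ref{Lip} makes $(u_n)$ \emph{decreasing}, so the solution is obtained as a decreasing limit via monotone convergence. You instead run an \emph{increasing} Picard-type iteration on \eqref{integral} itself, anchored below by the sub-solution $w$ of Lemma \ref{Lema.subs} (or indeed by $0$ --- the monotonicity argument never actually uses the Lipschitz property that you cite as the motivation for choosing $w$, since monotone iteration does not require a contraction). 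Both schemes rest on the same two pillars: the smoothing bound $\|\mathbf{S}_\gamma(t)\varphi\|_\infty\leq Ct^{-\gamma/2}\|\varphi\|_\infty$ from Proposition \ref{Key} (which is where $\gamma<\gamma_N$ enters, exactly as you say), and monotone convergence to pass to the limit; the uniform bound $M(T)$ with $M^q=o(M)$ and the continuity of the Duhamel term are handled identically. Your approach is more elementary (no contraction mapping, no singular Gronwall) and produces the minimal solution above $w$; the paper's approach is heavier but yields as by-products the comparison machinery of Lemma \ref{Lip} and approximating solutions with strictly positive lower bounds, and its decreasing limit is the natural candidate for the maximal solution. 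One small caveat: your construction invokes Lemma \ref{Lema.subs}, which the paper only states and proves in Section \ref{S4}; its proof is independent of Theorem \ref{GE}, so there is no circularity, but the logical order of presentation would have to be rearranged (or, more simply, start the iteration from $0$ and drop the dependence on $w$ altogether).
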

\begin{remark}
{\rm Here are some comments about Theorem \ref{GE}.
\begin{enumerate}
    \item[a)] The same result also applies to the case of $q=1$, which corresponds to the linear heat equation with a singular potential.
    \item[b)] For $\gamma=-2$ the existence of solutions for problem (\ref{In.uno}) fails in general, see \cite{R1}, \cite[Proposition 6.6]{R4}  and \cite{HT2001} (for bounded domains).
    \item[c)] {From \cite[Theorem 1.1(ii)]{R2}, it follows that problem \eqref{In.uno} admits a solution in $L^\infty((0,T), L^r_{ul}(\mathbb{R}^N))$ provided that $u_0 \in L^r_{ul}(\mathbb{R}^N)$, $u_0 \geq 0$, $r > 1$, $0 < q < 1$, and $0 < \gamma < \gamma_N$. It is worth noting that, while an approximation method is employed to prove Theorem \ref{GE}, a supersolution method is used in \cite{R2}.}
\end{enumerate}}
 
\end{remark}

In the next result, we obtain an estimate from below for non-trivial solutions of problem (\ref{In.uno}) and we use it to establish a uniqueness result. 

\begin{theorem} 
\label{UN}
Assume that $0<q<1$, $0<\gamma<\gamma_N$, $u_0 \in C_0(\mathbb{R}^N)$, $0\neq u_0\geq 0$. Let $u \in C([0,\infty); C_0(\mathbb{R}^N))$ {be} a non-negative solution of problem (\ref{In.uno}). Then
\begin{enumerate}
\item[(i)]  $u$ is positive and verifies the following estimate
\begin{equation}\label{Est.pr}
u(x,t)\geq [(1-q) \eta_0]^{1/(1-q)} t^{1/(1-q)} (|x|+\sqrt{t})^{-\gamma/(1-q)}={{:}}w(x,t),
\end{equation}
 for all $x\in \mathbb{R}^N$ and $t\geq 0$, where
{
 \begin{equation}
    \label{eta-0}
   {\eta_0=\eta_0(\gamma)=(4\pi)^{-N/2}\displaystyle\int_{\mathbb{R}^N} \exp \left( -\frac{|z|^2}{4}\right)(1+|z|)^{-\frac{\gamma}{1-q}}dz.}
 \end{equation}
}
Moreover, $w\in C([0,\infty); C_0(\mathbb{R}^N))$ verifies (\ref{integral}), with $\leq $ {instead} of $=$, and $u_0=0$. In other words, $w$ is a sub-solution of problem (\ref{In.uno}) with $u_0=0$.

\item[(ii)]  There exists $\gamma^* \in (0, \gamma_N)$ so that for $0<\gamma<\gamma^*$, $u$ is the unique solution of the problem (\ref{In.uno}).
\end{enumerate}

\end{theorem}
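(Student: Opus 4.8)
\emph{Part (i): positivity and the lower bound.} The plan is to show first that a non-negative nontrivial solution becomes instantaneously positive, and then to bootstrap this into the quantitative bound \eqref{Est.pr}. Since $u$ solves \eqref{integral} and $u_0 \geq 0$ is nontrivial, the first term ${\mathbf S}(t)u_0$ is already strictly positive for all $t>0$ by the strong positivity of the heat kernel; hence $u>0$ in $\mathbb{R}^N\times(0,\infty)$. To get the explicit estimate, I would discard the $u_0$ term and iterate the Duhamel term: setting $u_1(t) = \int_0^t {\mathbf S}_\gamma(t-\sigma)\,u^q(\sigma)\,d\sigma$ we have $u\geq u_1$, and since $u$ is positive on compact sets one obtains a crude first lower bound. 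The cleaner route is to guess that $w(x,t)$ as defined is a sub-solution with zero initial data (the second assertion of (i)) and then invoke a comparison principle: any non-negative solution of \eqref{integral} dominates any sub-solution with smaller initial data. So the technical heart of Part (i) is the verification that $w$ satisfies \eqref{integral} with $\leq$. This is a direct computation: one must check
\[
w(x,t) \;\leq\; \int_0^t {\mathbf S}_\gamma(t-\sigma)\, w^q(\sigma)\,d\sigma
\;=\; \int_0^t {\mathbf S}(t-\sigma)\Big(|\cdot|^{-\gamma} w(\cdot,\sigma)^q\Big)(x)\,d\sigma.
\]
Plugging in $w(x,\sigma) = c\,\sigma^{1/(1-q)}(|x|+\sqrt{\sigma})^{-\gamma/(1-q)}$ with $c = [(1-q)\eta_0]^{1/(1-q)}$, the nonlinearity becomes $|x|^{-\gamma} w^q = c^q\,\sigma^{q/(1-q)}|x|^{-\gamma}(|x|+\sqrt{\sigma})^{-\gamma q/(1-q)}$. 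Using $|x|^{-\gamma}(|x|+\sqrt\sigma)^{-\gamma q/(1-q)} \geq (|x|+\sqrt\sigma)^{-\gamma/(1-q)}$ (valid since $\gamma + \gamma q/(1-q) = \gamma/(1-q)$ and $|x|\le |x|+\sqrt\sigma$) reduces the claim to a self-similar heat-kernel estimate for the profile $(|x|+\sqrt\sigma)^{-\gamma/(1-q)}$; the scaling $z = y/\sqrt{t-\sigma}$ together with the definition \eqref{eta-0} of $\eta_0$ and the condition $\gamma < \gamma_N$ (ensuring local integrability of $|x|^{-\gamma}$ and convergence of the Gaussian integral) should close the inequality after an elementary time integration $\int_0^t \sigma^{q/(1-q)}\,d\sigma = (1-q)t^{1/(1-q)}$. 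That $w\in C([0,\infty);C_0(\mathbb{R}^N))$ and $w(x,0)=0$ is immediate from the explicit formula.

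\emph{Part (ii): uniqueness for small $\gamma$.} The standard obstruction to uniqueness for concave nonlinearities is precisely the non-Lipschitz behaviour of $u^q$ near $u=0$; the lower bound \eqref{Est.pr} is designed to sidestep this, because on the region where two solutions $u_1,u_2$ both live they are bounded below by the same positive $w$, and there $s\mapsto s^q$ is Lipschitz with constant controlled by $q\,w^{q-1}$. So I would take two non-negative solutions $u_1,u_2$ of \eqref{In.uno} with the same data, set $v = u_1-u_2$, and estimate using \eqref{integral}:
\[
|v(x,t)| \;\leq\; \int_0^t {\mathbf S}(t-\sigma)\Big(|\cdot|^{-\gamma}\,|u_1^q - u_2^q|(\cdot,\sigma)\Big)(x)\,d\sigma
\;\leq\; q\int_0^t {\mathbf S}(t-\sigma)\Big(|\cdot|^{-\gamma}\,w(\cdot,\sigma)^{q-1}\,|v(\cdot,\sigma)|\Big)(x)\,d\sigma,
\]
where I used $|a^q-b^q|\leq q\min(a,b)^{q-1}|a-b|$ for $a,b>0$ together with $u_i\geq w$ and $q-1<0$. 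Now $w^{q-1}(x,\sigma) = c^{q-1}\sigma^{-1}(|x|+\sqrt\sigma)^{\gamma}$, so the effective potential is $|x|^{-\gamma}w^{q-1} \sim \sigma^{-1}(|x|+\sqrt\sigma)^{\gamma}|x|^{-\gamma} \leq C\sigma^{-1}$ near $x=0$ but grows like $|x|^\gamma$ at infinity. This is where smallness of $\gamma$ enters. The plan is a Gronwall-type argument in a weighted $L^\infty$ norm: I would look for a weight $\rho(x,t) = t^{a}(|x|+\sqrt t)^{b}$ (or similar) such that the map $v\mapsto q\int_0^t {\mathbf S}(t-\sigma)(|\cdot|^{-\gamma}w^{q-1}|v|)\,d\sigma$ is a contraction on $\{\|v/\rho\|_\infty \leq M\}$ for $t$ small, the contraction constant being a power of $\gamma$ (or of the time interval) that is $<1$ precisely when $\gamma<\gamma^*$. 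Concretely one estimates $\int_0^t {\mathbf S}(t-\sigma)\big((|\cdot|^{-\gamma}w^{q-1}\rho)(\cdot,\sigma)\big)(x)\,d\sigma$ against $\rho(x,t)$ by the same self-similar heat-kernel computation as in Part (i), and the resulting constant is a continuous function of $\gamma$ that vanishes as $\gamma\to 0$ (since at $\gamma=0$ the potential is the integrable-in-time $\sigma^{-1}\cdot 0$... more precisely the $|x|^{-\gamma}$ factor disappears and the singularity $\sigma^{-1}$ is compensated by a gain from $\rho$); hence there is $\gamma^*>0$ making it $<1$. Once $v\equiv 0$ on $[0,T_0]$ for some $T_0$ depending only on $\gamma,q,N$, a standard continuation/iteration in time gives $u_1=u_2$ on all of $[0,\infty)$.

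\emph{Main obstacle.} I expect the delicate point to be the uniqueness argument: controlling the integral operator with the singular, spatially-growing potential $|x|^{-\gamma}w^{q-1}$ requires choosing the right weight $\rho$ and carefully tracking how the operator norm depends on $\gamma$, so that one can honestly extract a threshold $\gamma^*<\gamma_N$ below which a contraction is obtained. The $\sigma^{-1}$ time singularity of $w^{q-1}$ is borderline and must be absorbed by a compensating power of $\sigma$ coming from the weight and from the concavity exponent $q$; getting the bookkeeping to yield a constant that is strictly less than $1$ (rather than merely finite) for small $\gamma$ is the crux. The verification that $w$ is a sub-solution in Part (i), while computational, is routine self-similar analysis and should not present real difficulty.
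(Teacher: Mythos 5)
Your plan for part (i) has a genuine gap: you propose to derive the lower bound \eqref{Est.pr} by checking that $w$ is a sub-solution with zero initial data and then invoking ``a comparison principle: any non-negative solution dominates any sub-solution with smaller initial data.'' No such comparison principle holds for the concave nonlinearity $u^q$, $0<q<1$ --- indeed $u\equiv 0$ is a non-negative solution with zero data and $w>0$ is a sub-solution with zero data, so the principle you invoke would give $0\geq w$, a contradiction. This failure of comparison is exactly the non-uniqueness phenomenon the theorem is fighting; the only comparison-type statement available (Lemma \ref{Lem.PC}) is proved \emph{using} \eqref{Est.pr}, so your route is circular. The paper instead proves \eqref{Est.pr} directly: for $u_0\geq \tilde C e^{-a|x|^2}$ it iterates the Duhamel term, establishing by induction lower bounds of the form $C_k\,t^{(1-q^k)/(1-q)}(1+4aq^{k-1}t)^{-N}\exp\bigl(-\tfrac{aq^k|x|^2}{4aq^kt+1}\bigr)(|x|+\sqrt t)^{-\gamma(1-q^k)/(1-q)}$ and letting $k\to\infty$; the general case follows by replacing $u_0$ with $u(\epsilon)\geq \mathbf{S}(\epsilon)u_0\geq Ce^{-a|x|^2}$ and letting $\epsilon\to 0$. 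The strict positivity of $\mathbf{S}(t)u_0$ (your first observation) is what seeds this iteration, but it must be propagated quantitatively through the nonlinearity, not delegated to a comparison theorem that is unavailable here. Your verification that $w$ is a sub-solution does match Lemma \ref{Lema.subs}, but in the theorem that is a separate assertion, not the mechanism behind \eqref{Est.pr}.

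For part (ii) your overall strategy (linearize $u^q$ via $u\geq w$, then iterate with a constant that is $<1$ for small $\gamma$) matches the paper's, and you correctly identify the $\sigma^{-1}$ singularity of $|x|^{-\gamma}w^{q-1}$ as the crux. But your sketch omits the step that makes the iteration start: an a priori bound showing that the difference vanishes at $t=0$ at a definite rate. The paper gets this from concavity alone, $[v^q-u^q]_+\leq ([v-u]_+)^q$, which yields $\|z(t)\|_\infty\leq \bigl(\tfrac{2\eta_1}{2-\gamma}t^{1-\gamma/2}\bigr)^{1/(1-q)}$ for $z=[v-u]_+$; only with this input is $\int_0^t\sigma^{-1+(2-\gamma)/(2(1-q))}(t-\sigma)^{-\gamma/2}\,d\sigma$ finite, and the iteration then produces a factor $\Lambda(\gamma)^k$ with $\Lambda(\gamma)\to q<1$ (not $0$) as $\gamma\to 0$. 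Your proposed contraction on $\{\|v/\rho\|_\infty\leq M\}$ presupposes that $v/\rho$ is bounded, which is precisely what this preliminary concavity estimate supplies and what your outline does not establish.
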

\begin{remark}  
{\rm {Below are a few remarks regarding Theorem \ref{UN}.}
\begin{enumerate}
\item[a)] {When $\gamma=0$, we find that $\eta_0$ equals $1$ and the expression for $w{(x,t)}$ simplifies to $[(1-q)t]^{1/(1-q)}$. In other words, $w$ corresponds to the lower bound provided in \eqref{In.b}.}
\item[b)] {When $\gamma=0$ and $u_0\neq 0$, it has been established in \cite{AE87} that problem \eqref{In.uno} possesses a unique solution. Hence, we generalize this result to small values of $\gamma > 0$.}
\end{enumerate}
}
\end{remark}
We conclude the introduction with an outline of the paper. In Section \ref{S2}, we recall some useful tools and introduce some auxiliary results. Section \ref{S3} is devoted to the proof of the existence result stated in Theorem \ref{GE}. {In Section \ref{S4}, we present the proof of our uniqueness result, which is stated as Theorem \ref{UN}. Additionally, we denote the norm in the Lebesgue space $L^p(\mathbb{R}^N)$ as $\|\cdot\|_p$, where $1 \leq p \leq \infty$.}
\section{Preliminaries}
\label{S2}

Let $\{{\mathbf S}(t)\}_{t\geq 0}$ be the linear heat semigroup defined by ${\mathbf S}(t)\,\varphi=G_t\star\varphi, t>0,$ where $G_t$ is the heat kernel given by
$$
G_t(x)=\left(4\pi t\right)^{-N/2}\,{\rm e}^{-|x|^2/(4t)},\;\;\;t>0,\;\;\;x\in\R^N.
$$
In the subsequent lemma, we gather some basic facts regarding the heat semigroup $\mathbf{S}(t)$ that will be helpful for our purposes.
\begin{lemma} \label{S-t} Let $a>0, t_0>0,$ and $0\leq v_0\in C_0(\R^N)$ with $v_0\neq 0$. Then, the following assertions hold true:
    \begin{equation}
       \label{smooth-effect}
    \|{\mathbf S}(t)\,\varphi\|_{p}\leq \|\varphi\|_p,\quad \mbox{for}\;\;\; t>0,\;\;\; 1\leq p\leq \infty,\;\;\; \varphi\in L^p (\mathbb{R}^N).
    \end{equation}
    \begin{equation}
    \label{Identity}
{\mathbf S}(t)\left(e^{-a|\cdot|^2}\right)(x)=(2\pi)^N\,(1+4at)^{-N/2}\,e^{-\frac{a|x|^2}{1+4at}},\quad t>0,\;\; x\in\R^N.
\end{equation}
\begin{equation}
\label{S-t-0}
\left[{\mathbf S}(t_0)v_0\right](x)\geq  C' \,e^{-\frac{|x|^2}{2t_0}}\quad\mbox{for all}\quad x\in\R^N,
    \end{equation}
    where $C'= (4\pi\,t_0)^{-N/2}\displaystyle\int_{\R^N}e^{-\frac{|y|^2}{2t_0}}v_0(y) dy>0$.
\end{lemma}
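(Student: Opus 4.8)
The plan is to prove the three assertions of Lemma~\ref{S-t} in turn, each being a short computation with the Gaussian kernel $G_t$.

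\textbf{Proof of \eqref{smooth-effect}.} This is the standard $L^p$-contractivity of the heat semigroup. For $1\le p<\infty$ I would write ${\mathbf S}(t)\varphi=G_t\star\varphi$, note that $\|G_t\|_1=1$ (a direct Gaussian integral), and apply Young's convolution inequality $\|G_t\star\varphi\|_p\le\|G_t\|_1\,\|\varphi\|_p=\|\varphi\|_p$. The case $p=\infty$ follows by the same estimate $|G_t\star\varphi(x)|\le\|\varphi\|_\infty\int_{\R^N}G_t(y)\,dy=\|\varphi\|_\infty$.

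\textbf{Proof of \eqref{Identity}.} Here I would simply compute the convolution
\[
\bigl[{\mathbf S}(t)(e^{-a|\cdot|^2})\bigr](x)=(4\pi t)^{-N/2}\int_{\R^N}e^{-|x-y|^2/(4t)}\,e^{-a|y|^2}\,dy
\]
by completing the square in $y$: the exponent is a quadratic form $-(a+\tfrac{1}{4t})|y|^2+\tfrac{1}{2t}x\cdot y-\tfrac{1}{4t}|x|^2$, whose minimization in $y$ produces the factor $e^{-a|x|^2/(1+4at)}$ and a residual Gaussian integral giving the prefactor $(1+4at)^{-N/2}$ (up to the $(2\pi)^N$ normalization stated). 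This is routine and I would not grind through the algebra in detail; one only needs to track the constants carefully.

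\textbf{Proof of \eqref{S-t-0}.} This is the key qualitative statement: a nontrivial nonnegative datum becomes strictly positive with an explicit Gaussian lower bound after time $t_0$. I would start from
\[
\bigl[{\mathbf S}(t_0)v_0\bigr](x)=(4\pi t_0)^{-N/2}\int_{\R^N}e^{-|x-y|^2/(4t_0)}\,v_0(y)\,dy,
\]
and use the elementary inequality $|x-y|^2\le 2|x|^2+2|y|^2$, so that $e^{-|x-y|^2/(4t_0)}\ge e^{-|x|^2/(2t_0)}\,e^{-|y|^2/(2t_0)}$. Pulling the $x$-dependent factor out of the integral gives
\[
\bigl[{\mathbf S}(t_0)v_0\bigr](x)\ge e^{-|x|^2/(2t_0)}\,(4\pi t_0)^{-N/2}\int_{\R^N}e^{-|y|^2/(2t_0)}\,v_0(y)\,dy = C'\,e^{-|x|^2/(2t_0)},
\]
and $C'>0$ because $v_0\ge0$, $v_0\not\equiv0$ and the weight $e^{-|y|^2/(2t_0)}$ is strictly positive. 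The only point requiring a word of care is the strict positivity of $C'$, which uses that $v_0$ is continuous and not identically zero, hence positive on a set of positive measure.

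The main (and essentially only) obstacle is bookkeeping of the Gaussian constants in \eqref{Identity}; everything else is immediate. I expect the proof to be a few lines per item.
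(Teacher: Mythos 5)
Your treatment of \eqref{smooth-effect} and \eqref{S-t-0} coincides with the paper's: Young's inequality with $\|G_t\|_1=1$ for the first, and the elementary bound $\tfrac{|x-y|^2}{4t_0}\le\tfrac{|x|^2}{2t_0}+\tfrac{|y|^2}{2t_0}$ for the third, with the same justification for $C'>0$. The only genuine divergence is \eqref{Identity}: the paper computes the Fourier transform $\mathscr{F}\bigl({\mathbf S}(t)(e^{-a|\cdot|^2})\bigr)(\xi)=(\pi/a)^{N/2}e^{-(t+\frac{1}{4a})|\xi|^2}$ and inverts, whereas you complete the square in the convolution integral directly. Both are legitimate, but you should actually finish your computation rather than deferring ``up to the $(2\pi)^N$ normalization stated'': completing the square gives exponent $-(a+\tfrac{1}{4t})\bigl|y-\tfrac{x}{1+4at}\bigr|^2-\tfrac{a|x|^2}{1+4at}$, and the residual Gaussian integral equals $\bigl(\tfrac{4\pi t}{1+4at}\bigr)^{N/2}$, which against the prefactor $(4\pi t)^{-N/2}$ yields exactly $(1+4at)^{-N/2}e^{-a|x|^2/(1+4at)}$ \emph{with no factor $(2\pi)^N$}. (A sanity check at $t\to0^+$ confirms this.) In other words, your more elementary route exposes that the stated identity carries a spurious $(2\pi)^N$, which in the paper's argument comes from omitting the $(2\pi)^{-N}$ normalization in the Fourier inversion for the convention $\widehat f(\xi)=\int e^{-ix\cdot\xi}f(x)\,dx$. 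The discrepancy is harmless downstream, since \eqref{Identity} is only used to produce a Gaussian lower bound whose constant is immediately absorbed into $\mathsf{C}$, but it is worth recording the correct constant.
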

\begin{proof}[{Proof of Lemma \ref{S-t}}]
    {The contractivity of the heat semigroup in $L^p$ spaces, as stated in \eqref{smooth-effect}, is a consequence of Young's inequality, coupled with the property $\|G_t\|_1=1$ for all $t>0.$ 
    To derive \eqref{Identity}, we note that
    $$
    \mathscr{F}\left({\mathbf S}(t)\left(e^{-a|\cdot|^2}\right)\right)(\xi)=\left(\frac{\pi}{a}\right)^{N/2}\,e^{-(t+\frac{1}{4a})|\xi|^2},
    $$
    where $\mathscr{F}$ denotes the Fourier transform.\footnote{{Here, we adopt the definition of the Fourier transform as  $\widehat{f}(\xi)=\mathscr{F}f(\xi):=\displaystyle\int_{\R^N}e^{-ix\cdot \xi}f(x)dx.$}} Applying the inverse Fourier transform, we obtain
    \begin{eqnarray*}
     {\mathbf S}(t)\left(e^{-a|\cdot|^2}\right)(x)&=& \left(\frac{\pi}{a}\right)^{N/2}\,\left(\frac{\pi}{t+\frac{1}{4a}}\right)^{N/2}\,e^{-\frac{|x|^2}{4(t+\frac{1}{4a})}}\\
     &=&(2\pi)^N (1+4at)^{-N/2}\,e^{-\frac{a|x|^2}{1+4at}}.
    \end{eqnarray*}
Finally, in order to derive \eqref{S-t-0}, we employ the elementary inequality $\frac{|x-y|^2}{4t_0}\leq \frac{|x|^2}{2t_0}+\frac{|y|^2}{2t_0}$. Hence,
    \begin{eqnarray*}
      \left[{\mathbf S}(t_0)v_0\right](x)&=&(4\pi t_0)^{-N/2}\,\displaystyle\int_{\R^N}\,e^{-\frac{|x-y|^2}{4t_0}}\,v_0(y)\,dy\\
      &\geq& (4\pi t_0)^{-N/2}\,e^{-\frac{|x|^2}{2t_0}}\displaystyle\int_{\R^N}\,e^{-\frac{|y|^2}{2t_0}}\,v_0(y)\,dy\\
      &=&C' \,e^{-\frac{|x|^2}{2t_0}}.
    \end{eqnarray*}
    }
\end{proof}
 By following a similar argument as presented in \cite[Theorem 1, p. 47]{Evans} and utilizing the density of $\mathcal{D}(\mathbb{R}^N)$, we can deduce the continuity of ${\mathbf S}(t)$ on $C_0(\mathbb{R}^N)$.
\begin{lemma}
Consider $u_0\in C_0(\R^N).$ Then 
\begin{equation}
    \label{S-t-C-0-1}
    {\mathbf S}(\cdot )u_0\in C([0,\infty); C_0(\R^N)).
\end{equation}
\end{lemma}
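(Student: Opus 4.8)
The plan is to establish the two properties that make $\{\mathbf{S}(t)\}_{t\ge 0}$ a strongly continuous semigroup on the Banach space $(C_0(\R^N),\|\cdot\|_\infty)$: invariance of $C_0(\R^N)$ under each $\mathbf{S}(t)$, and norm-continuity of $t\mapsto \mathbf{S}(t)u_0$ on $[0,\infty)$. The two tools I would lean on throughout are the $L^\infty$-contractivity \eqref{smooth-effect} and the density of $\mathcal{D}(\R^N)$ in $C_0(\R^N)$.

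First I would check that $\mathbf{S}(t)$ maps $C_0(\R^N)$ into itself. For $\varphi\in\mathcal{D}(\R^N)$ and $t>0$, $G_t\star\varphi$ is smooth and bounded, and since $G_t$ vanishes at infinity while $\varphi$ is compactly supported, $G_t\star\varphi\in C_0(\R^N)$. For a general $u_0\in C_0(\R^N)$, pick $\varphi_n\in\mathcal{D}(\R^N)$ with $\varphi_n\to u_0$ in $L^\infty$; by \eqref{smooth-effect} we get $\mathbf{S}(t)\varphi_n\to \mathbf{S}(t)u_0$ in $L^\infty$, and since $C_0(\R^N)$ is closed in $L^\infty(\R^N)$, the limit $\mathbf{S}(t)u_0$ lies in $C_0(\R^N)$.

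Next I would prove continuity at $t=0$, i.e.\ $\|\mathbf{S}(t)u_0-u_0\|_\infty\to 0$ as $t\to 0^+$. For $\varphi\in\mathcal{D}(\R^N)$ this is essentially \cite[Theorem 1, p. 47]{Evans}: after the rescaling $y=\sqrt{t}\,z$ and using $\int_{\R^N}G_1=1$,
\[
\mathbf{S}(t)\varphi(x)-\varphi(x)=\int_{\R^N}G_1(z)\,\big(\varphi(x-\sqrt{t}\,z)-\varphi(x)\big)\,dz,
\]
and the uniform continuity of $\varphi$ (splitting the integral into $|z|\le R$ and $|z|>R$) forces the right-hand side to $0$ uniformly in $x$. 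For general $u_0\in C_0(\R^N)$ I would approximate: given $\varepsilon>0$, choose $\varphi\in\mathcal{D}(\R^N)$ with $\|u_0-\varphi\|_\infty<\varepsilon$, and estimate, using \eqref{smooth-effect},
\[
\|\mathbf{S}(t)u_0-u_0\|_\infty\le \|\mathbf{S}(t)(u_0-\varphi)\|_\infty+\|\mathbf{S}(t)\varphi-\varphi\|_\infty+\|\varphi-u_0\|_\infty\le 2\varepsilon+\|\mathbf{S}(t)\varphi-\varphi\|_\infty,
\]
so $\limsup_{t\to 0^+}\|\mathbf{S}(t)u_0-u_0\|_\infty\le 2\varepsilon$, hence it is $0$. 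Continuity at an arbitrary $t_0>0$ then follows from the semigroup law and \eqref{smooth-effect}: writing the difference $\mathbf{S}(t)u_0-\mathbf{S}(t_0)u_0$ as $\mathbf{S}(\min(t,t_0))$ applied to $\mathbf{S}(|t-t_0|)u_0-u_0$ gives
\[
\|\mathbf{S}(t)u_0-\mathbf{S}(t_0)u_0\|_\infty\le \|\mathbf{S}(|t-t_0|)u_0-u_0\|_\infty\longrightarrow 0\quad\text{as }t\to t_0 .
\]
Combining the three steps yields $\mathbf{S}(\cdot)u_0\in C([0,\infty);C_0(\R^N))$.

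The only genuinely delicate point is the strong continuity at $t=0$ in the \emph{uniform} norm; it fails for merely bounded continuous data, and the reduction to $\mathcal{D}(\R^N)$---where uniform continuity comes for free---is precisely what makes it work. Everything else is a routine density-plus-contractivity bootstrap.
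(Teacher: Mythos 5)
Your proof is correct and follows essentially the same route as the paper: the key step, strong continuity at $t=0$, is obtained by exactly the paper's density-plus-contractivity argument (approximate $u_0$ by $\varphi\in\mathcal{D}(\R^N)$, use \eqref{smooth-effect} and the known convergence $\mathbf{S}(t)\varphi\to\varphi$). You additionally spell out the invariance of $C_0(\R^N)$ under $\mathbf{S}(t)$ and the continuity at $t_0>0$ via the semigroup law, which the paper leaves implicit by citing Evans; these added details are correct and harmless.
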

\begin{proof}
  For the convenience of the reader, we provide a proof here. Let $\varepsilon>0$. Due to the fact that $C_0(\R^N)=\overline{\mathcal{D}(\mathbb{R}^N)}^{L^\infty}$, we can find $\varphi\in \mathcal{D}(\mathbb{R}^N)$ such that
  \begin{equation}
    \label{Dens-0}
    \|u_0-\varphi\|_\infty \leq \varepsilon. 
  \end{equation}
To show that ${\mathbf S}(t)u_0 \in C_0(\mathbb{R}^N)$ for every $t > 0$, it suffices to prove that there exists $M > 0$ such that $|[{\mathbf S}(t)u_0](x)| \leq \varepsilon$ whenever $|x| \geq M$. To establish this, assume that the support of $\varphi$ is contained in $B_R(0)$, the ball of radius $R > 0$ centered at the origin. 

For $|x| \geq 2R$ and $|y| \leq R$, we have $|x - y| \geq |x| - |y| \geq R$. Using this, we can estimate:
$$
\begin{aligned}
|[{\mathbf S}(t)\varphi](x)| &\leq (4\pi t)^{-N/2} \int_{\mathbb{R}^N} \exp\left(-\frac{|x - y|^2}{4t}\right) |\varphi(y)| \, dy \\
&\leq (4\pi t)^{-N/2} \exp\left(-\frac{R^2}{4t}\right) \int_{|y| \leq R} |\varphi(y)| \, dy \\
&\leq (4\pi t)^{-N/2} \exp\left(-\frac{R^2}{4t}\right) \|\varphi\|_{1}.
\end{aligned}
$$
For $R$ sufficiently large, the term $(4\pi t)^{-N/2} \exp\left(-\frac{R^2}{4t}\right) \|\varphi\|_{1}$ can be made smaller than $\varepsilon$. Consequently, applying the contraction property of ${\mathbf S}(t)$ (as given in \eqref{smooth-effect}), we obtain
$$
\begin{aligned}
|[{\mathbf S}(t)u_0](x)| &\leq |[{\mathbf S}(t)(u_0 - \varphi)](x)| + |[{\mathbf S}(t)\varphi](x)| \\
&\leq \|u_0 - \varphi\|_{\infty} + \varepsilon\\
&\leq 2\varepsilon.
\end{aligned}
$$
The continuity of ${\mathbf S}(t)$ for $t > 0$ is a direct consequence of its continuity at $t = 0$. Thus, by combining the density property \eqref{Dens-0} with the contraction property \eqref{smooth-effect}, we obtain
\begin{equation}
      \label{Dens-1}
      \begin{split}
          \|{\mathbf S}(t)u_0-u_0\|_\infty &\leq \|{\mathbf S}(t)\varphi-\varphi\|_\infty+\|{\mathbf S}(t)(u_0-\varphi)\|_\infty+\|u_0-\varphi\|_\infty\\
          &\leq 2\varepsilon+\|{\mathbf S}(t)\varphi-\varphi\|_\infty.
      \end{split}
  \end{equation}
  Since $\varphi\in \mathcal{D}(\R^N)$, we have ${\mathbf S}(t)\varphi{\rightarrow}\; \varphi$ in $L^\infty(\R^N)$ as $t\to 0^+$. Therefore, it follows from \eqref{Dens-1} that
$$
      \limsup_{t\to 0}\,\|{\mathbf S}(t)u_0-u_0\|_\infty\leq 2\varepsilon.
$$
This completes the proof of \eqref{S-t-C-0-1} since $\varepsilon>0$ is arbitrary.
\end{proof}
\begin{remark}
 {\rm 
 {Although the heat semigroup ${\mathbf S}(t): L^\infty(\mathbb{R}^N) \to L^\infty(\mathbb{R}^N)$ is a contraction mapping as stated in \eqref{smooth-effect}, it is not continuous at $t=0$. 
 To illustrate this, consider the case $ N = 1 $ and define the function  
$$
u_0(x) = 
\begin{cases} 
1 & \text{if } x > 0, \\
0 & \text{if } x \leq 0.
\end{cases}
$$  
It is straightforward to verify that for all $ t > 0 $,  
$$
\|{\mathbf S}(t)u_0 - u_0\|_\infty = \frac{1}{2}.
$$  
This shows that the operator $\mathbf{S}(t)$  fails to be continuous at $t = 0$ in the $L^\infty$ norm.}}
\end{remark}
For $\gamma \geq 0$, let ${\mathbf S}_{\gamma}$ be defined as
$$
{\left[{\mathbf S}_{\gamma}(t)\varphi\right](x)=(4\pi\,t)^{-N/2}\displaystyle\int_{\R^N}\,\exp{\left(-\frac{|x-y|^2}{4t}\right)}|y|^{-\gamma}\,\varphi(y)\, dy,\;\;\;t>0,\;\; x\in\R^N.}
$$
To handle the nonlinear term in \eqref{In.uno}, we rely on a crucial estimate established in \cite{SW}.
\begin{proposition}
\label{Key}
Let $0< \gamma<N$ and $1<q_1, q_2\leq \infty$ satisfy the following
$$
\frac{1}{q_2}<\frac{\gamma}{N}+\frac{1}{q_1}<1.
$$
For any $t>0$, the following operators are bounded
$$
\begin{array}{ll}
\mathbf{S}_\gamma (t): L^{q_1}(\mathbb{R}^N) \to L^{q_2}(\mathbb{R}^N)& \mbox{ if } 1<q_2<\infty,\\
\mathbf{S}_\gamma (t): L^{q_1}(\mathbb{R}^N) \to C_0(\mathbb{R}^N)& \mbox{ if } q_2=\infty.
\end{array}
$$
Furthermore, 
\begin{equation}
\label{SS}
\|{\mathbf S}_{\gamma}(t)\varphi\|_{q_2}\leq C\,t^{-\frac{N}{2}\left(\frac{1}{q_1}-\frac{1}{q_2}\right)-\frac{\gamma}{2}}\,\|\varphi\|_{q_1},\;{\text{for}\;\;\; t>0,\;\; \varphi\in L^{q_1} (\mathbb{R}^N)}
\end{equation}
where $C$ is a constant depending only on $N, \gamma, q_1$ and $q_2$.
\end{proposition}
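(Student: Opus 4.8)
The plan is to reduce everything to the convolution structure of $\mathbf{S}_\gamma(t)$ and apply a combination of Hölder's inequality, the Hardy--Littlewood--Sobolev inequality, and a rescaling argument to extract the precise power of $t$. Writing $K_t(x)=(4\pi t)^{-N/2}e^{-|x|^2/(4t)}$ for the heat kernel, we have
\begin{equation}
\bigl[\mathbf{S}_\gamma(t)\varphi\bigr](x)=\int_{\R^N} K_t(x-y)\,|y|^{-\gamma}\,\varphi(y)\,dy,
\end{equation}
so the operator is a convolution of $K_t$ with the function $|\cdot|^{-\gamma}\varphi$. The first step is the dimensionless (scaling) reduction: substituting $x=\sqrt{t}\,\tilde x$, $y=\sqrt{t}\,\tilde y$ shows that it suffices to prove the estimate at $t=1$, because each such substitution contributes an explicit power of $t$; matching the powers of $\sqrt{t}$ coming from the kernel normalization, the weight $|y|^{-\gamma}$, and the two $L^p$ norms produces exactly the exponent $-\frac{N}{2}(\frac1{q_1}-\frac1{q_2})-\frac{\gamma}{2}$ in \eqref{SS}. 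So I would first carry out this rescaling to isolate the claim for $t=1$.

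The second step is to prove boundedness of $\varphi\mapsto K_1\star(|\cdot|^{-\gamma}\varphi)$ from $L^{q_1}$ to $L^{q_2}$. Here I would split the weight as $|y|^{-\gamma}=|y|^{-\gamma}\mathbf{1}_{\{|y|\le 1\}}+|y|^{-\gamma}\mathbf{1}_{\{|y|\ge1\}}$. For the bounded-away-from-the-origin piece, $|y|^{-\gamma}\mathbf 1_{\{|y|\ge1\}}\in L^\infty\cap L^r$ for all $r>N/\gamma$, so multiplication by it maps $L^{q_1}$ into $L^{q_1}$ (and into smaller Lebesgue exponents), and then convolution with the Schwartz function $K_1$ (which lies in every $L^s$) raises integrability arbitrarily via Young's inequality; the condition $\gamma/N+1/q_1<1$, i.e.\ $q_1>N/(N-\gamma)$, is what guarantees there is room to land in $L^{q_2}$ (or in $C_0$ when $q_2=\infty$, using that convolution of an $L^s$ function with a Schwartz function is continuous and vanishes at infinity). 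For the singular piece near the origin, the key point is a weighted estimate: by Hölder, $\| |\cdot|^{-\gamma}\mathbf1_{\{|y|\le1\}}\varphi\|_{m}\le \| |\cdot|^{-\gamma}\mathbf1_{\{|y|\le1\}}\|_{\ell}\,\|\varphi\|_{q_1}$ with $1/m=1/\ell+1/q_1$, and $|\cdot|^{-\gamma}\mathbf1_{\{|y|\le1\}}\in L^\ell$ precisely when $\ell<N/\gamma$; choosing $\ell$ just below $N/\gamma$ gives $|\cdot|^{-\gamma}\varphi\in L^m$ for $m$ just above $(1/q_1+\gamma/N)^{-1}$, and then Young's inequality with $K_1\in L^r$ (any $r$) finishes, the constraint $1/q_2<\gamma/N+1/q_1$ ensuring the target exponent $q_2$ is attainable. (Alternatively, one can treat the singular piece in one stroke via the Hardy--Littlewood--Sobolev inequality together with the pointwise bound $K_1(x-y)\lesssim \langle x-y\rangle^{-\gamma}$ on a fixed ball, but the split is cleaner.)

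The third step is simply to reassemble: combining the two pieces gives boundedness $L^{q_1}\to L^{q_2}$ at $t=1$ with a constant depending only on $N,\gamma,q_1,q_2$, and undoing the scaling yields \eqref{SS}. For the endpoint $q_2=\infty$ one additionally records that the image lies in $C_0(\R^N)$: this follows because $K_1\star g\in C_0$ whenever $g\in L^m$ for some $1\le m<\infty$ (approximate $g$ in $L^m$ by compactly supported continuous functions and use that $K_1\star$ maps these into $C_0$, then pass to the limit uniformly). The main obstacle is bookkeeping rather than conceptual: one has to choose the auxiliary exponents $\ell, m, r$ so that all the Young/Hölder conditions hold simultaneously and strictly, which is exactly where the hypothesis $1/q_2<\gamma/N+1/q_1<1$ (with strict inequalities, and $q_1,q_2>1$) is used — and I would double-check that the "just below $N/\gamma$" choices can be made without degenerating, which they can precisely under these strict inequalities. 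Since this estimate is attributed to \cite{SW}, I would in practice cite it and only sketch the above; but the scaling-plus-split argument is the self-contained route.
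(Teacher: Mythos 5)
The paper does not prove Proposition \ref{Key} at all: it is quoted directly from \cite{SW}, so there is no in-paper argument to compare yours against. Your self-contained route --- parabolic rescaling $x=\sqrt{t}\,\tilde x$, $y=\sqrt{t}\,\tilde y$ to reduce to $t=1$ (the bookkeeping does produce exactly the exponent $-\frac N2(\frac1{q_1}-\frac1{q_2})-\frac\gamma2$), followed by splitting the weight into its singular part on $\{|y|\le1\}$ and its bounded, $L^r$-decaying part on $\{|y|\ge1\}$ and applying H\"older then Young --- is correct and is a standard way to prove such Hardy-weighted smoothing estimates. Two small bookkeeping slips that a full write-up should fix: (a) with $\ell$ just below $N/\gamma$ you get $1/m=1/\ell+1/q_1$ just \emph{above} $\gamma/N+1/q_1$, hence $m$ just \emph{below} $(\gamma/N+1/q_1)^{-1}$, not above; this is the correct direction for Young's inequality, which needs $1/m\ge 1/q_2$, so the argument survives. (b) The hypothesis $\gamma/N+1/q_1<1$ is what makes the \emph{near-origin} piece land in some $L^m$ with $m\ge1$ (local integrability of $|y|^{-\gamma}|\varphi|$ via H\"older with exponent $q_1'$), whereas you attach it to the far piece, where it plays no role; the far piece only invokes $1/q_2<\gamma/N+1/q_1$, and only in the case $q_2<q_1$, to find $r>N/\gamma$ with $1/r+1/q_1\ge 1/q_2$. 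With those labels corrected, and the $C_0$ statement handled exactly as you say (approximate in $L^m$, $m<\infty$, by compactly supported continuous functions and use that $C_0(\R^N)$ is closed in $L^\infty$), the proof is complete and self-contained, which is arguably more informative than the paper's bare citation.
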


We shall use the following technical result.
\begin{lemma}
\label{maxim}
{Consider a $C^1$-function $f: [0,\infty)\to\R$ with the property that $f'(r) \leq 0$ for all $r>0$. For any $t>0$, define
$$F(x)=(4\pi t)^{-N/2} \displaystyle\int_{\mathbb{R}^N} \exp \left (-\frac{|x-y|^2}{4t} \right) f(|y|)dy,\;\;\; x\in \mathbb{R}^N.$$}
Then $F$ reaches its maximum at $x=0$.
\label{Pinsky}
\end{lemma}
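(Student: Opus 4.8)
The statement is a symmetrization-type fact: convolving a radially non-increasing function with the (radially symmetric, non-increasing) heat kernel produces a function whose maximum sits at the origin. The plan is to fix $x \in \mathbb{R}^N$, compare $F(x)$ with $F(0)$, and show $F(0) - F(x) \geq 0$. Writing both quantities as integrals against the kernel, we have
\begin{equation}
F(0)-F(x)=(4\pi t)^{-N/2}\int_{\mathbb{R}^N}\Bigl(e^{-\frac{|y|^2}{4t}}-e^{-\frac{|x-y|^2}{4t}}\Bigr)f(|y|)\,dy.
\end{equation}
The natural move is to exploit the reflection symmetry of the Gaussian about the hyperplane $H_x=\{y:\ y\cdot x = \tfrac12|x|^2\}$, the perpendicular bisector of the segment from $0$ to $x$. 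Let $\sigma$ denote the orthogonal reflection across $H_x$, so $\sigma(0)=x$ and $|\sigma(y)|=|x-y|$. Then $e^{-|x-y|^2/4t}=e^{-|\sigma(y)|^2/4t}$, and changing variables $y\mapsto \sigma(y)$ in the part of the integral coming from the second Gaussian turns the difference into an integral over a single half-space.

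Concretely, split $\mathbb{R}^N = H_x^- \cup H_x^+$, where $H_x^-=\{y:\ y\cdot x < \tfrac12|x|^2\}$ is the half-space containing the origin and $H_x^+$ the one containing $x$. On $H_x^-$ one has $|y|\le|x-y|$, hence $e^{-|y|^2/4t}\ge e^{-|x-y|^2/4t}$. After reflecting the $H_x^+$-portion back to $H_x^-$ via $\sigma$, one obtains
\begin{equation}
F(0)-F(x)=(4\pi t)^{-N/2}\int_{H_x^-}\Bigl(e^{-\frac{|y|^2}{4t}}-e^{-\frac{|x-y|^2}{4t}}\Bigr)\bigl(f(|y|)-f(|x-y|)\bigr)\,dy.
\end{equation}
For $y\in H_x^-$ we have $|y|\le |x-y|$; the Gaussian factor is then $\ge 0$ because $s\mapsto e^{-s^2/4t}$ is decreasing, and the factor $f(|y|)-f(|x-y|)$ is $\ge 0$ because $f$ is non-increasing (i.e. $f'\le 0$ implies $f(|y|)\ge f(|x-y|)$ whenever $|y|\le|x-y|$). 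Hence the integrand is pointwise non-negative and $F(0)\ge F(x)$, which is exactly the claim.

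\textbf{Main obstacle.} The only genuinely delicate point is the reflection bookkeeping: one must verify that $\sigma$ maps $H_x^+$ bijectively onto $H_x^-$ with unit Jacobian (it is an isometry), that it fixes $H_x$ pointwise so the boundary contributes nothing, and that it swaps the roles $|y| \leftrightarrow |x-y|$ correctly, so that the two half-space integrals combine into the single displayed integral over $H_x^-$. A minor integrability remark is also needed: since $f$ need only be $C^1$ on $[0,\infty)$ with $f'\le 0$, $f$ could be unbounded or grow, so one should note that $F$ is well-defined precisely when the stated integral converges, and all manipulations above are then justified by Fubini/Tonelli applied to the non-negative integrand in the final expression. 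No monotonicity beyond $f'\le 0$ and no regularity beyond $C^1$ is actually used.
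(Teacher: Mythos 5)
Your reflection argument is correct and complete. The paper itself gives no proof of this lemma --- it defers to Lemma~6 of the cited work of Pinsky, where the same symmetrization idea (pair $y$ with its mirror image across the perpendicular bisector of the segment $[0,x]$, and use that both the Gaussian and $f$ are radially non-increasing) is the underlying mechanism, so your route is essentially the expected one, just written out in full. Your closing remark is worth emphasizing: the argument uses only that $f$ is non-increasing and that $r\mapsto e^{-r^2/4t}f(r)$ is integrable against the Gaussian weight, not the $C^1$ regularity on $[0,\infty)$ assumed in the statement. This matters because the paper later applies the lemma to $f(r)=\left(1+\sqrt{\sigma}/r\right)^{\gamma}$, which is unbounded at $r=0$ and hence not $C^1$ on $[0,\infty)$; your proof covers that case without modification.
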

{The proof of Lemma \ref{maxim} follows a similar approach to that presented in \cite[Lemma 6]{Pinsky}. Therefore, we omit the detailed steps of the proof.}

We also make use of the following singular Gronwall inequality.
\begin{lemma}
\label{SGI}
Let $T>0$ and $0\leq \psi\in L^\infty([0,T])$ satisfy
$$
\psi(t)\leq A+M\displaystyle\int_0^t (t-\sigma)^{-\alpha}\psi(\sigma)\,d\tau, \quad \quad \mbox{for a.e.}\quad 0\leq t\leq T,
$$
where $0\leq \alpha<1$, {$A\geq 0$, and $M>0$}. Then, there exists an explicit function $C(M, \alpha, t)$ such that
\begin{equation}
\label{SGI2}
\psi(t)\leq A\, C(M, \alpha, t),\quad \mbox{for a.e.}\quad 0\leq t\leq T.
\end{equation}
In particular, if $A=0$ then $\psi\equiv 0$.
\end{lemma}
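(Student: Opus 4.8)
The plan is to run the classical iteration argument underlying Henry's singular Gronwall lemma. Set $\beta:=1-\alpha\in(0,1]$ and introduce the linear operator
$$
(\mathcal{T}\phi)(t):=M\displaystyle\int_0^t (t-\tau)^{-\alpha}\,\phi(\tau)\,d\tau ,
$$
which maps $L^\infty([0,T])$ into $C([0,T])$ (the kernel $(t-\tau)^{-\alpha}$ is integrable in $\tau$ on $[0,t]$) and is monotone: it preserves the pointwise order between nonnegative functions. The hypothesis reads $\psi\leq A+\mathcal{T}\psi$ a.e.\ on $[0,T]$; since $\mathcal{T}$ only sees $\psi$ through an integral, modifying $\psi$ on a null set is harmless, so iterating $n$ times and using monotonicity gives
$$
\psi(t)\leq A\sum_{k=0}^{n-1}(\mathcal{T}^k\mathbf{1})(t)+(\mathcal{T}^n\psi)(t)\leq A\sum_{k=0}^{n-1}(\mathcal{T}^k\mathbf{1})(t)+\|\psi\|_{L^\infty([0,T])}\,(\mathcal{T}^n\mathbf{1})(t),
$$
where $\mathbf{1}$ denotes the constant function $1$.

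The crux is to compute the iterated kernels $\mathcal{T}^k\mathbf{1}$ explicitly. Using the Beta integral $\int_0^t(t-\tau)^{\beta-1}\tau^{b-1}\,d\tau=t^{\beta+b-1}B(\beta,b)$ together with $B(\beta,b)=\Gamma(\beta)\Gamma(b)/\Gamma(\beta+b)$, a short induction on $k$ yields
$$
(\mathcal{T}^k\mathbf{1})(t)=\frac{\bigl(M\,\Gamma(\beta)\bigr)^k}{\Gamma(k\beta+1)}\,t^{k\beta},\qquad k\geq 0 .
$$
Because $\beta>0$, Stirling's formula (or the elementary bound $\Gamma(k\beta+1)\geq c^k\,(k!)^{\beta}$ for a suitable $c>0$) shows that $\Gamma(k\beta+1)$ grows faster than any geometric sequence; hence $(\mathcal{T}^n\mathbf{1})(t)\to 0$ uniformly on $[0,T]$ as $n\to\infty$, and the series $\sum_{k\geq 0}(\mathcal{T}^k\mathbf{1})(t)$ converges on $[0,T]$ to the Mittag-Leffler-type quantity
$$
C(M,\alpha,T):=\sum_{k=0}^{\infty}\frac{\bigl(M\,\Gamma(1-\alpha)\,T^{1-\alpha}\bigr)^k}{\Gamma\bigl(k(1-\alpha)+1\bigr)}<\infty ,
$$
which is finite and explicit. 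Letting $n\to\infty$ in the displayed inequality gives $\psi(t)\leq A\,C(M,\alpha,T)$ for a.e.\ $t\in[0,T]$, and the case $A=0$ yields $\psi\equiv 0$ at once.

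I do not expect a genuine obstacle here; the only points needing care are bookkeeping ones: that $\psi\in L^\infty$ so that $\mathcal{T}^n\psi$ is dominated by $\|\psi\|_\infty\,\mathcal{T}^n\mathbf{1}$, that the a.e.\ inequality survives the finitely many iterations (true since $A+\mathcal{T}\psi$ is continuous and $\mathcal{T}$ is insensitive to null-set modifications), and the induction step verifying the closed form for $\mathcal{T}^k\mathbf{1}$, where one uses $\Gamma(\beta+1)=\beta\,\Gamma(\beta)$ to absorb the factor $1/\beta$ produced at the first step.
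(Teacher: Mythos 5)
Your proof is correct. The paper itself does not prove Lemma \ref{SGI} but defers to the standard references (Henry, Amann, Cazenave--Haraux, Webb), and your argument is exactly the classical one used there: iterate the monotone Volterra operator, compute $\mathcal{T}^k\mathbf{1}$ via the Beta integral, and sum the resulting series. The induction for $(\mathcal{T}^k\mathbf{1})(t)=\bigl(M\Gamma(\beta)\bigr)^k t^{k\beta}/\Gamma(k\beta+1)$ checks out, the remainder $\mathcal{T}^n\psi\leq\|\psi\|_\infty\,\mathcal{T}^n\mathbf{1}\to 0$ is justified, and your constant $C(M,\alpha,T)={\mathcal E}_{1-\alpha}\bigl(M\Gamma(1-\alpha)T^{1-\alpha}\bigr)$ agrees with the Mittag--Leffler expression quoted in the paper's remark following the lemma.
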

\begin{remark}
~
{\rm 
    \begin{itemize}
    \item[a)] Lemma \ref{SGI} is commonly recognized as the {\it Gronwall-Henry lemma} or the {\it singular Gronwall lemma}. Detailed discussions and proofs can be found in various sources such as \cite[p.140]{Henry}, \cite[Theorem 3.3.1]{Amann}, and \cite[Lemma 8.1.1]{CazHar}. For further insights and applications, interested readers can also refer to \cite[Theorem 3.2, p. 696]{Webb2019}.
    \item[b)] {An explicit estimate for the function $C(M,\alpha,t)$ in \eqref{SGI2} is well-established. Specifically, from \cite[Theorem 3.3.1]{Amann}, for any $\epsilon > 0$, there exists a constant $c = c(\alpha, \epsilon)$ such that  
$$
C(M,\alpha, t) = 1 + c\, Mt^{1-\alpha} \exp\left[(1+\epsilon)\mu(\alpha)t\right],  
$$
where $\mu(\alpha) = \left(M\Gamma(1-\alpha)\right)^{1/(1-\alpha)}$.}
    \item[c)] When $\psi$ is assumed to be more regular, such as $\psi\in C([0,T])$, the estimate \eqref{SGI2} can be further refined, as demonstrated in \cite[Theorem 3.1, p. 537]{DM1986}. Specifically, we have    
\begin{equation}
    \label{SGI-Refined}
    \psi(t)\leq A\, {\mathcal E}_{1-\alpha}\left(M \Gamma(1-\alpha)\,t^{1-\alpha}\right),\quad 0\leq t\leq T,
\end{equation}
where ${\mathcal E}_{1-\alpha}$ is the Mittag-Leffler function which is a generalization of the exponential function ($\alpha=0$) and it is defined by
$$
{\mathcal E}_{1-\alpha}(z)=\sum_{n=0}^{\infty}\,\frac{z^n}{\Gamma(n(1-\alpha)+1)}.
$$
 \item[d)] {As noted in \cite{DM1986}, the estimate \eqref{SGI-Refined} is sharp under the assumption that $\psi$ is continuous. In contrast, the inequality \eqref{SGI2} is not known to be sharp, and the function $C(M,\alpha,t)$ is only partially explicit. Specifically, while the dependence on $M$, $\alpha$, and $t$ is explicitly given, the constant $c = c(\alpha, \epsilon)$ is not precisely determined.}
 \end{itemize}
}
\end{remark}

The subsequent approximation lemma will be helpful in the proof of Theorem \ref{GE}.
\begin{lemma}
\label{app}
{Let $0<q<1$ and the sequence $g_n : [0,\infty)\to [0,\infty)$ be defined as
\begin{equation}
\label{g-n}
 g_n(r)=\; \left\{
\begin{array}{cllll}
(2n)^{1-q} r \quad
&\mbox{if}&\quad 0\leq r \leq \frac{1}{2n},\\
r^{{q}}\quad&\mbox{if}&\quad
r\geq \frac{1}{2n}.
\end{array}\right.
\end{equation}
Then, the following properties are valid:
\begin{itemize}
\item[(i)]$g_n\leq g_{n+1},\quad\mbox{for all}\; n\geq1.$
\item[(ii)]$g_n(r)\leq r^q,\quad\mbox{for all}\; r\geq 0,\;n\geq 1.$
\item[(iii)]$|g_n(r)-g_n(s)|\leq (1+q)(2n)^{1-q}|r-s|,\quad\mbox{for all}\; r,s\geq 0,\;n\geq 1.$
\item[(iv)]$\displaystyle\lim_{n\to\infty}g_n(r)=r^q\quad\mbox{uniformly in}\;\;\; [0,\infty).$
\end{itemize}
}
\end{lemma}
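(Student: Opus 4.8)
The plan is to verify the four stated properties of $g_n$ directly from the piecewise definition \eqref{g-n}, treating each as an elementary calculus exercise; the only point requiring genuine care is the Lipschitz bound (iii), where one must control the slope at the junction point $r=\tfrac{1}{2n}$.

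For (ii), on $[\tfrac{1}{2n},\infty)$ we have $g_n(r)=r^q$ with equality, while on $[0,\tfrac{1}{2n}]$ we have $g_n(r)=(2n)^{1-q}r$; since $r\le \tfrac{1}{2n}$ there, we can write $(2n)^{1-q}r = (2n)^{1-q} r^{1-q} r^q \le (2n)^{1-q}(2n)^{-(1-q)} r^q = r^q$, using $r^{1-q}\le (2n)^{-(1-q)}$. For (i), fix $r$; if $r\ge \tfrac{1}{2n}$ then also $r\ge\tfrac{1}{2(n+1)}$, so $g_n(r)=r^q=g_{n+1}(r)$. If $r\le \tfrac{1}{2(n+1)}$, then $g_n(r)=(2n)^{1-q}r\le (2(n+1))^{1-q}r=g_{n+1}(r)$ since $1-q>0$. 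The remaining case $\tfrac{1}{2(n+1)}\le r\le \tfrac{1}{2n}$ gives $g_n(r)=(2n)^{1-q}r$ and $g_{n+1}(r)=r^q$; by the computation used for (ii) applied with $n$ (and $r\le\tfrac{1}{2n}$), $(2n)^{1-q}r\le r^q$, so $g_n(r)\le g_{n+1}(r)$. For (iv), the estimate $0\le r^q-g_n(r)$ is (ii), and on $[0,\tfrac{1}{2n}]$ we have $r^q-g_n(r)=r^q-(2n)^{1-q}r\le r^q\le (2n)^{-q}$, while on $[\tfrac1{2n},\infty)$ the difference vanishes; hence $\sup_{r\ge 0}|r^q-g_n(r)|\le (2n)^{-q}\to 0$.

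For (iii), the function $g_n$ is $C^1$ except possibly at $r=\tfrac1{2n}$, and it suffices to bound $g_n'$ on each piece and then handle a pair $r,s$ straddling the junction by splitting $[r,s]$ at $\tfrac1{2n}$ (the bound on each sub-interval then adds up). On $(0,\tfrac1{2n})$ we have $g_n'(r)=(2n)^{1-q}$. On $(\tfrac1{2n},\infty)$ we have $g_n'(r)=qr^{q-1}$, which is decreasing in $r$, so its supremum there is attained as $r\downarrow \tfrac1{2n}$, giving $g_n'(r)\le q(2n)^{1-q}$. Thus on either piece $|g_n'|\le \max\{1,q\}(2n)^{1-q}=(2n)^{1-q}$, and for general $r\le s$ one gets $|g_n(r)-g_n(s)|\le (2n)^{1-q}|r-s|$, which is in fact stronger than the claimed constant $(1+q)(2n)^{1-q}$; I would simply record the weaker stated bound, which is all that is needed later.

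I do not anticipate any real obstacle: every step is a direct manipulation of the explicit formula \eqref{g-n}, with the mild subtlety being the correct treatment of the non-differentiability at $r=\tfrac1{2n}$ in part (iii), resolved by the elementary device of splitting the segment $[r,s]$ at that point and invoking the mean value theorem on each sub-interval separately (or, alternatively, noting that $g_n$ is continuous and piecewise $C^1$, hence locally Lipschitz with constant bounded by $\sup|g_n'|$).
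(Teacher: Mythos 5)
Your proof is correct, and since the paper simply declares this lemma ``straightforward'' and omits all details, you are supplying exactly the elementary verification the authors had in mind: direct checks from the piecewise formula, with the junction point $r=\tfrac{1}{2n}$ handled by splitting the interval and bounding $g_n'$ on each piece. Your argument in fact yields the sharper Lipschitz constant $(2n)^{1-q}$ in (iii), which of course implies the stated bound $(1+q)(2n)^{1-q}$.
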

{The proof of the aforementioned lemma can be carried out straightforwardly, and for brevity, we omit the details here.}

{The following lemma summarizes several important inequalities involving the use of the positive part notation, $[\cdot]_+$.\footnote{{Hereafter, we employ the notation $[\kappa]_+ := \max(\kappa, 0)$ for any $\kappa \in \mathbb{R}$.}}} 
\begin{lemma}
   \label{posi-part}
  {Let $g: [0,\infty)\to [0,\infty)$ be a function.
  \begin{itemize}
      \item[(i)] If $g$ is Lipschitz non-decreasing, then for every $a, b\geq 0$,
     \begin{equation}
         \label{posi-Lip}
         \left[g(a)-g(b)\right]_+\leq\,L\left[a-b\right]_+,
     \end{equation}
     where $L > 0$ denotes a Lipschitz constant for $g$.
     \item[(ii)] If $g$ is a concave function, then for every $a, b\geq 0$, 
     \begin{equation}
         \label{posi-Conc}
         \left[g(a)-g(b)\right]_+\leq\,g\left([a-b]_+\right).
     \end{equation}
  \end{itemize}
  In particular, we have 
   \begin{equation}
         \label{posi-Conc-q}
         \left[a^q-b^q\right]_+\leq\,\left([a-b]_+\right)^q,\quad 0<q<1,\quad a, b\geq 0.
     \end{equation}}
\end{lemma}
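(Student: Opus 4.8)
The plan is to prove the three inequalities in Lemma \ref{posi-part} essentially by case analysis on the sign of the quantities inside $[\cdot]_+$, exploiting monotonicity in part (i) and the geometry of concave functions in part (ii).

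\textbf{Part (i).} I would argue as follows. Fix $a,b\ge 0$. If $a\le b$, then since $g$ is non-decreasing we have $g(a)\le g(b)$, so the left-hand side of \eqref{posi-Lip} is $0$ and the inequality holds trivially (the right-hand side is non-negative). If $a>b$, then $[a-b]_+=a-b$ and $[g(a)-g(b)]_+ \le |g(a)-g(b)| \le L|a-b| = L(a-b) = L[a-b]_+$, using the Lipschitz bound. This settles part (i).

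\textbf{Part (ii).} Again fix $a,b\ge 0$ and split on the sign of $a-b$. If $a\le b$, the left side of \eqref{posi-Conc} is $0$, while $g([a-b]_+)=g(0)\ge 0$ since $g$ maps into $[0,\infty)$, so the inequality holds. If $a>b$, then $[a-b]_+=a-b$ and we must show $g(a)-g(b)\le g(a-b)$. Here is where concavity enters: write $a$ as a convex combination involving $0$, $b$, and $a-b$. Concretely, if $b>0$ set $\lambda = b/a \in (0,1)$ so that $a = \lambda\cdot 0 + (1-\lambda)\cdot\frac{a}{1-\lambda}$ — hmm, that is awkward; cleaner is to note $b = \frac{b}{a}\,a + \bigl(1-\frac{b}{a}\bigr)\cdot 0$ and $a-b = \frac{b}{a}\,0 + \bigl(1-\frac{b}{a}\bigr)\,a$, wait this also needs care. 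The clean statement I would use is the standard fact that a concave function $g:[0,\infty)\to\mathbb{R}$ with $g(0)\ge 0$ is subadditive: $g(x+y)\le g(x)+g(y)$. Indeed, for $x,y>0$, concavity gives $g(x) = g\bigl(\frac{y}{x+y}\cdot 0 + \frac{x}{x+y}(x+y)\bigr) \ge \frac{y}{x+y}g(0) + \frac{x}{x+y}g(x+y) \ge \frac{x}{x+y}g(x+y)$, and symmetrically $g(y)\ge \frac{y}{x+y}g(x+y)$; adding yields $g(x)+g(y)\ge g(x+y)$. Applying this with $x=b$ and $y=a-b$ gives $g(a) = g(b+(a-b)) \le g(b) + g(a-b)$, i.e. $g(a)-g(b)\le g(a-b) = g([a-b]_+)$, as desired. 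The boundary cases $b=0$ or $a=b$ are immediate.

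\textbf{The particular case \eqref{posi-Conc-q}.} I would simply apply part (ii) to $g(r)=r^q$ with $0<q<1$, after noting that this $g$ maps $[0,\infty)$ into $[0,\infty)$, is continuous, and is concave (its second derivative $q(q-1)r^{q-2}$ is negative on $(0,\infty)$). Then \eqref{posi-Conc} reads exactly $[a^q-b^q]_+ \le ([a-b]_+)^q$.

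I do not anticipate a genuine obstacle here — the lemma is elementary. The one point requiring a little care is making the concavity argument in part (ii) rigorous at the boundary (when one of $a-b$, $b$ vanishes, or when $a=b$), and recording explicitly that $g(0)\ge 0$ is what makes the subadditivity estimate work; without $g(0)\ge 0$ the subadditivity can fail. I would therefore state the subadditivity of non-negative concave functions vanishing-or-nonnegative at $0$ as the key sub-step and keep the case distinctions visible.
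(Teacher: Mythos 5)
Your argument follows the paper's proof almost step for step: part (i) by the Lipschitz bound after reducing to the case where the positive part is nonzero, and part (ii) via subadditivity of a concave function with $g(0)\ge 0$, then specializing to $g(r)=r^q$. The one step you leave unjustified is in the case $a\le b$ of part (ii): you assert that the left-hand side $\left[g(a)-g(b)\right]_+$ vanishes, which tacitly assumes $g(a)\le g(b)$, i.e.\ that $g$ is non-decreasing. Concavity alone does not give this (consider $g(r)=1-r$, which is concave and decreasing); what saves the claim is that a concave function $g:[0,\infty)\to[0,\infty)$ is \emph{automatically} non-decreasing: if $g(b)<g(a)$ for some $a<b$, the chord bound $g(x)\le g(b)+\frac{g(b)-g(a)}{b-a}\,(x-b)$ for $x>b$ forces $g(x)\to-\infty$, contradicting non-negativity. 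The paper proves this monotonicity explicitly before invoking subadditivity, and you should record it as well; with that one-line addition your proof is complete and coincides with the paper's.
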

\begin{proof}[{Proof of Lemma \ref{posi-part}}]
     {To establish \eqref{posi-Lip}, we can assume, without loss of generality, that $g(a)>g(b)$. Given that $g$ is non-decreasing, it follows that $a\geq b$. As a result, we obtain that
     \begin{equation*}
        \left[g(a)-g(b)\right]_+= |g(a)-g(b)|\leq L|a-b|= L \left[a-b\right]_+.
     \end{equation*}
     We will now focus on \eqref{posi-Conc}. To begin, we will establish that $g$ is a sub-additive function, meaning that for any $a, b \geq 0$, the following inequality holds:
 \begin{equation}
     \label{sub-add}
     g(a+b)\leq g(a)+g(b).
 \end{equation} 
Exploiting the concavity of $g$ and setting $\lambda=\frac{a}{a+b}$, we obtain
\begin{align*}
g(a) &\geq \lambda\,g(a+b)+(1-\lambda)g(0), \\
g(b) &\geq (1-\lambda)\,g(a+b)+\lambda\,g(0).
\end{align*}
Consequently, due to the non-negativity of $g(0)$, we deduce that
 $$
 g(a)+g(b)\geq g(a+b)+g(0)\geq g(a+b).
 $$
 Next, we establish that $g$ is non-decreasing. Suppose, for contradiction, that $g(b) < g(a)$ for some $0 \leq a < b$. Employing the concavity of $g$, for any $x > b$, we have
 $$
 g(x)\leq \frac{g(b)-g(a)}{b-a}(x-b)+g(b).
 $$
 As $\frac{g(b) - g(a)}{b - a} < 0$, it follows that 
$$
\limsup_{x\to \infty}g(x)\leq -\infty.
$$
 This leads to a contradiction, and, consequently, we conclude that $g$ must be non-decreasing.\newline 
 Now, we return to \eqref{posi-Conc}. Without loss of generality, we may assume $g(a) > g(b)$. Since $g$ is a non-decreasing function, it follows that $a \geq b$. Applying the sub-additivity property \eqref{sub-add}, we deduce that $g(a) - g(b) \leq g(a - b)$, giving rise to  \eqref{posi-Conc} as expected.}
\end{proof}
\section{Proof of Theorem \ref{GE}}
\label{S3}
The following lemma will be useful in our proof. 
\begin{lemma}
\label{Lip}
Let $g: [0, \infty) \to [0, \infty)$ be a non-decreasing Lipschitz function with $g(0) = 0$. Assume that $0 < \gamma < \gamma_N$, where $\gamma_N$ is defined in \eqref{G.N}. 
Then, for any $0\leq u_0\in L^\infty (\R^N)$, the Cauchy problem
\begin{equation}\label{In.Lip}
\left \{ \begin{array}{rl}
u_t-\Delta u&= |x|^{-\gamma}\, g(u)\quad  \mbox{ in }\quad {\mathbb{R}^N\times (0,\infty)}, \\
u(x,0)&=u_0(x)\quad\mbox{ in }\quad \mathbb{R}^N,
\end{array}\right.
\end{equation}
has a unique non-negative solution  $u$ defined on $[0,\infty)$, i.e. $u \in C( (0,T]; L^\infty (\R^N))$ for all $T>0$ and 
$$u(t)=\mathbf{S}(t)u_0 +\int_0^t \mathbf{S_\gamma} (t-\sigma) g(u(\sigma)) d\sigma,$$
for $t\in [0, T]$. Moreover, if $u$ and $v$ are two solutions of \eqref{In.Lip} with initial values $u_0\geq v_0$, then $u\geq v$.
\end{lemma}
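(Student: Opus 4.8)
The plan is to treat \eqref{In.Lip} as a fixed-point problem for the map $\Phi(u)(t) = \mathbf{S}(t)u_0 + \int_0^t \mathbf{S}_\gamma(t-\sigma)g(u(\sigma))\,d\sigma$ on the space $X_T = C([0,T]; L^\infty(\mathbb{R}^N))$ (or on $L^\infty((0,T); L^\infty(\mathbb{R}^N))$), for $T$ small. The key analytic input is Proposition \ref{Key}: taking $q_1 = q_2 = \infty$ is not allowed since we need $\gamma/N < 1$ and strict inequalities, so instead I would pick a finite exponent $p$ large enough that the pair $(q_1,q_2) = (p,\infty)$ (or an intermediate pair) satisfies $\frac1\infty < \frac{\gamma}{N} + \frac1p < 1$, which is possible precisely because $\gamma < \gamma_N \le N$; this yields $\|\mathbf{S}_\gamma(t)\varphi\|_\infty \le C t^{-\frac{N}{2p} - \frac{\gamma}{2}}\|\varphi\|_p$ with the exponent $\beta := \frac{N}{2p} + \frac{\gamma}{2} < 1$ for $p$ large (here one also uses $\gamma < 2$, i.e. $\gamma < \gamma_N$, so that $\gamma/2 < 1$ leaves room). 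Combined with $\|g(u(\sigma))\|_p \le L\|u(\sigma)\|_p$ for compactly-data... more carefully, since $g(0)=0$ and $g$ is Lipschitz, $|g(s)| \le L s$, so $g(u(\sigma)) \in L^\infty \cap$ whatever $u_0$ gives; to keep things in $L^\infty$ one estimates $\|g(u(\sigma))\|_\infty \le L\|u(\sigma)\|_\infty$ directly and uses that $\mathbf{S}_\gamma(t): L^\infty \to L^\infty$ — but $\mathbf{S}_\gamma(t)$ does not map $L^\infty$ to $L^\infty$ when $\gamma$ close to $N$... Actually $|x|^{-\gamma}$ is locally integrable iff $\gamma < N$, and then $\mathbf{S}_\gamma(t)\mathbf{1} = \mathbf{S}(t)|\cdot|^{-\gamma}$ is bounded; so in fact $\|\mathbf{S}_\gamma(t)\varphi\|_\infty \le \|\,|\cdot|^{-\gamma} G_t * \mathbf{1}\,\|_\infty \|\varphi\|_\infty \le C t^{-\gamma/2}\|\varphi\|_\infty$ by scaling, and $\gamma/2 < 1$ since $\gamma < \gamma_N \le 2$. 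This is the clean estimate to use: $\|\mathbf{S}_\gamma(t)\varphi\|_\infty \le C t^{-\gamma/2}\|\varphi\|_\infty$.

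With that bound in hand the contraction estimate is standard: for $u,v \in X_T$ with norms $\le R := 2\|u_0\|_\infty + 1$,
\[
\|\Phi(u)(t) - \Phi(v)(t)\|_\infty \le C L \int_0^t (t-\sigma)^{-\gamma/2}\,\|u(\sigma)-v(\sigma)\|_\infty\,d\sigma \le C L\, \frac{T^{1-\gamma/2}}{1-\gamma/2}\,\|u-v\|_{X_T},
\]
and similarly $\|\Phi(u) - \mathbf{S}(\cdot)u_0\|_{X_T} \le CL R\, T^{1-\gamma/2}/(1-\gamma/2)$, so for $T$ small (depending only on $L$, $\gamma$, $N$) $\Phi$ is a contraction on the ball of radius $R$ in $X_T$, giving a unique local solution. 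Nonnegativity of the solution: iterate $\Phi$ starting from $u^{(0)} \equiv \mathbf{S}(t)u_0 \ge 0$; since $g \ge 0$ and the kernels are positive, the whole sequence $u^{(k)} = \Phi(u^{(k-1)})$ stays $\ge 0$, hence so does the limit. Global existence: because $T$ depends only on $L,\gamma,N$ and not on $\|u_0\|_\infty$ — wait, $R$ depends on $\|u_0\|_\infty$, and the smallness of $T$ for the a priori bound does depend on $R$; so instead I would derive an a priori $L^\infty$ bound via the singular Gronwall Lemma \ref{SGI}: setting $\psi(t) = \|u(t)\|_\infty$ one gets $\psi(t) \le \|u_0\|_\infty + CL\int_0^t (t-\sigma)^{-\gamma/2}\psi(\sigma)\,d\sigma$, so $\psi$ is bounded on every $[0,T]$ by $\|u_0\|_\infty\, C(CL, \gamma/2, T)$, and this prevents blow-up in finite time; combining with the local theory (restarted at times in a uniform grid once the solution is bounded) yields a solution on $[0,\infty)$. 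Continuity in $t$ at $0$ follows from \eqref{S-t-C-0-1} for the linear part and from the $t^{1-\gamma/2}$ vanishing of the Duhamel integral.

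The monotonicity/comparison statement is the part I would handle with a bit more care, and it is the main obstacle. The cleanest route: show $\Phi$ is monotone — if $u \le v$ pointwise then $g(u) \le g(v)$ (g non-decreasing) and the positivity of the heat and $\mathbf{S}_\gamma$ kernels give $\Phi(u) \le \Phi(v)$; likewise $u_0 \le v_0 \Rightarrow \mathbf{S}(t)u_0 \le \mathbf{S}(t)v_0$. Then the Picard iterates $u^{(k)}$ for data $u_0$ and $v^{(k)}$ for data $v_0$, both started from $u^{(0)} = 0$ (which is $\le$ the first honest iterate, so the sequences are monotone increasing by induction and converge to the respective solutions), satisfy $u^{(k)} \le v^{(k)}$ for all $k$ by induction using monotonicity of $\Phi$, hence in the limit $u \le v$ on the common interval of existence; then propagate to all of $[0,\infty)$ by the uniqueness and the global bound. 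Alternatively, and perhaps more transparently, one can subtract the two integral equations, apply $[\cdot]_+$, use \eqref{posi-Lip} from Lemma \ref{posi-part} together with positivity of the kernels to get
\[
\|[v(t)-u(t)]_+\|_\infty \le \|[v_0-u_0]_+\|_\infty + CL\int_0^t (t-\sigma)^{-\gamma/2}\|[v(\sigma)-u(\sigma)]_+\|_\infty\,d\sigma,
\]
wait — that gives the wrong direction; one should instead bound $[u-v]_+$ and conclude $[u-v]_+ \equiv 0$ from Lemma \ref{SGI} with $A = \|[u_0-v_0]_+\|_\infty = 0$. The only subtlety is justifying that $\|\mathbf{S}_\gamma(t-\sigma)\big(g(u(\sigma))-g(v(\sigma))\big)\|_\infty \le CL(t-\sigma)^{-\gamma/2}\|[u(\sigma)-v(\sigma)]_+\|_\infty$, which again uses positivity of the $\mathbf{S}_\gamma$ kernel applied to the pointwise bound $[g(u)-g(v)]_+ \le L[u-v]_+$; the negative part of $g(u)-g(v)$ only helps. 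This same computation with $u_0 = v_0$ gives uniqueness of the solution for each datum.
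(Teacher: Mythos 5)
Your proposal follows essentially the same route as the paper: a contraction argument for $\Phi$ in $L^\infty((0,T);L^\infty)$ using $\|\mathbf{S}_\gamma(t)\varphi\|_\infty\leq C\,t^{-\gamma/2}\|\varphi\|_\infty$ (which requires $\gamma<N$ for finiteness and $\gamma<2$ for time-integrability, i.e.\ exactly $\gamma<\gamma_N$), unconditional uniqueness and a global-in-time a priori bound via the singular Gronwall lemma, and the comparison claim via the positive part $[\,\cdot\,]_+$ combined with $[\mathbf{S}_\gamma(t)\varphi]_+\leq \mathbf{S}_\gamma(t)[\varphi]_+$ and \eqref{posi-Lip}. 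Two small points. First, your detour around Proposition \ref{Key} is unnecessary: the hypothesis $\tfrac{1}{q_2}<\tfrac{\gamma}{N}+\tfrac{1}{q_1}<1$ with $q_1=q_2=\infty$ reads $0<\tfrac{\gamma}{N}<1$, which holds, so \eqref{SS} already gives the $L^\infty\to L^\infty$ bound with exponent $t^{-\gamma/2}$; your direct scaling computation is of course also fine and yields the same thing.

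Second, and more importantly, you talked yourself out of the correct inequality at the end of the comparison argument. Since the hypothesis is $u_0\geq v_0$ and the goal is $u\geq v$, the quantity to kill is $\omega(t)=[v(t)-u(t)]_+$, and the inequality you first wrote,
\begin{equation}
\|[v(t)-u(t)]_+\|_\infty \leq \|[v_0-u_0]_+\|_\infty + CL\int_0^t (t-\sigma)^{-\gamma/2}\,\|[v(\sigma)-u(\sigma)]_+\|_\infty\,d\sigma,
\end{equation}
is exactly the right one: here $[v_0-u_0]_+=0$ because $v_0\leq u_0$, so Lemma \ref{SGI} with $A=0$ gives $\omega\equiv 0$, i.e.\ $v\leq u$. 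This is precisely the paper's argument. Your ``correction'' to bounding $[u-v]_+$ with ``$A=\|[u_0-v_0]_+\|_\infty=0$'' is wrong on both counts: $[u_0-v_0]_+=u_0-v_0$ is generally nonzero, and even if you could conclude $[u-v]_+\equiv 0$ that would prove $u\leq v$, the reverse of the claim. With that reversal undone, the proof is complete and matches the paper's.
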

\begin{proof}[{Proof of Lemma \ref{Lip}}] { Consider $M>\|u_0\|_\infty$, $T>0$, and the Banach space $E_T= L^\infty((0,T); L^\infty(\mathbb{R}^N))$ equipped with the norm $\|u\|_T=\esssup_{0\leq t\leq T}\,\|u(t)\|_\infty$. We define the closed cone in $E_T$ as follows:
$$K_T=\Big\{u \in E_T;\;\; u\geq 0\quad \mbox{and}\quad \|u\|_{T} \leq M+1 \Big\}.$$
It can easily be verified that $(K_T, d)$ is a complete metric space, where $d(u,v)=\|u-v\|_T$.
Let us define the mapping  
$$
u\longmapsto \Phi[u](t):={\mathbf S}(t)u_{0}+\displaystyle\int_{0}^{t}\,{\mathbf S}_{\gamma}(t-\sigma)\,g(u(\sigma))\,d\sigma.
$$
 Our aim is to demonstrate that $\Phi(K_T) \subset K_T$ and that $\Phi$ is a contraction provided that $T$ is sufficiently small. To establish this, let $L > 0$ denote a Lipschitz constant for the function $g$. Then, using \eqref{smooth-effect} and \eqref{SS}, we can establish that for any $u,v \in E_T$, the following holds.
\begin{eqnarray*}
\|\Phi[u](t)\|_\infty&\leq&\|{\mathbf S}(t)u_{0}\|_\infty+\displaystyle\int_0^t\,\Big\|{\mathbf S}_{\gamma}(t-\sigma)\left(g(u(\sigma))-g(0)\right)\Big\|_\infty\,d\sigma\\
&\leq& \|u_0\|_\infty+C  L\displaystyle\int_0^t\, (t-\sigma)^{-\gamma/2}\,\|u(\sigma)\|_\infty\,d\sigma\\
&\leq& M+C\,L\,(M+1)\frac{T^{1-\gamma/2}}{1-\gamma/2}
\end{eqnarray*}
and 
\begin{eqnarray*}
\|\Phi[u](t)-\Phi[v](t)\|_\infty&\leq&\displaystyle\int_0^t\,\Big\|{\mathbf S}_{\gamma}(t-\sigma)(g(u(\sigma))-g(v(\sigma)))\Big\|_\infty\,d\sigma\\
&\leq& C  L\displaystyle\int_0^t\, (t-\sigma)^{-\gamma/2}\,\|u(\sigma)-v(\sigma)\|_\infty\,d\sigma\\
&\leq& C \,L\,\frac{T^{1-\gamma/2}}{1-\gamma/2}\,\|u-v\|_T.
\end{eqnarray*}
Here, $C$ represents the constant from Proposition \ref{Key}. Consequently, we get $\Phi(K_T)\subset K_T$ and $\Phi: K_T\to K_T$ is a strict contraction for sufficiently small $T>0$. Hence, $\Phi$ possesses a fixed point $0\leq u\in E_T$, which serves as a local solution.\\

Since $\mathbf{S}(\cdot) u_0 \in C((0,\infty), L^\infty(\mathbb{R}^N))$,
and by Proposition \ref{Key}
$$
\begin{array}{ll}
\left \|\int_\tau^t \mathbf{S}_\gamma(t-\sigma) g(u(\sigma)) d\sigma \right \|_\infty& \leq \| g(u)\|_{T}\int_{\tau}^t (t-\sigma)^{-\gamma/2}d\sigma\\
& \leq C \| g(u)\|_{T} \; (t-\tau)^{1-\gamma/2} \to 0
\end{array}
$$
as $(t -\tau) \to 0$, for $0\leq \tau <t\leq T$, the continuity of $u$ follows as \cite[p. 285]{Weissler2}.

Next, we establish the unconditional uniqueness. Suppose that we have two solutions $u, v \in E_T$ that both satisfy \eqref{In.Lip} in $\mathbb{R}^N\times (0,T)$, and let $w = u - v$. This allows us to write the following inequality
$$
\|w(t)\|_\infty\leq C  L\displaystyle\int_0^t\, (t-\sigma)^{-\gamma/2}\,\|w(\sigma)\|_\infty\,d\sigma.
$$
Applying Lemma \ref{SGI}, we can conclude that $w = 0$, which establishes the desired uniqueness result.

As a consequence, we can define the maximal solution $u$ on the time interval $[0, T_{\text{\text{max}}})$ of equation \eqref{In.Lip}. In order to establish $T_{\text{\text{max}}}=\infty$, we proceed by contradiction, assuming that $T_{\text{\text{max}}}<\infty$. Observing the estimate
$$
\|u(t)\|_\infty\leq\|u_0\|_\infty+C  L\displaystyle\int_0^t\, (t-\sigma)^{-\gamma/2}\,\|u(\sigma)\|_\infty\,d\sigma,
$$
and using the singular Gronwall inequality \eqref{SGI2} in combination with the fact that $\gamma<2$, we infer that 
\begin{equation*}
\|u(t)\|_{\infty}\leq\|u_0\|_\infty \,C(\gamma, L, T_{\text{max}})<\infty \quad \mbox{for a.e.}\quad 0\leq t\leq T_{\text{max}}.
\end{equation*}
Consequently, we conclude that $T_{\text{max}}=\infty$.\\
Finally, it remains to show that $u\geq v$ when $u_0\geq v_0$. This can be achieved by establishing that  $\omega(t):=[v(t)-u(t)]_+=0$. Given that $\mathbf{S}(t)(v_0-u_0)\leq 0$ and utilizing the straightforward observation
$$
\left[\mathbf{S}_\gamma(t)\varphi\right]_+\leq \mathbf{S}_\gamma(t)\left[\varphi\right]_+,
$$
alongside the inequalities \eqref{SS} and \eqref{posi-Lip}, we derive
\begin{equation}
    \label{Compar}
        \|\omega(t)\|_\infty \leq\,C\,L\displaystyle\int_0^t (t-\sigma)^{-\gamma/2}\,\|\omega(\sigma)\|_\infty\,d\sigma.
\end{equation}
By applying Lemma \ref{SGI} to \eqref{Compar}, we conclude that $\|\omega(t)\|_\infty=0$ for all $t>0$, thereby establishing $u\geq v$ as desired.
}
\end{proof}
Now we are ready to give the proof of Theorem \ref{GE}.
\begin{proof}[{Proof of Theorem \ref{GE}}]
{Let $(g_n)$ be the sequence given by Lemma \ref{app}, and consider the approximate Cauchy problems
$$
\left \{ \begin{array}{rl}
u_t-\Delta u&= |x|^{-\gamma}\, g_n(u)\quad  \mbox{ in }\quad  \mathbb{R}^N \times (0,{\infty}),\\
u({x,0})&=u_0{(x)}+1/n \quad\mbox{ in }\quad \mathbb{R}^N.
\end{array}\right.
$$
By Lemma \ref{Lip} for every $n$ there is a unique non-negative function $u_n$ defined on $[0,\infty)$ solution, that is  $u_n\in L^\infty ((0,T); L^\infty(\R^N))$ for all $T>0$ and satisfies the integral equation
$$
u_n(t)= {\mathbf S}(t)[u_{0}+1/n]+\displaystyle\int_{0}^{t}\,{\mathbf S}_{\gamma}(t-\sigma)\,g_n(u_n(\sigma))\,d\sigma.
$$
for $t\in [0,T]$. We claim that $u_n\leq u_m$ whenever $n\geq m$. Indeed, thanks to the non-negativity of $u_n$, we have $u_n(t)\geq {\mathbf S}(t)[u_{0}+1/n]\geq {\mathbf S}(t)[1/n]=1/n$. Let $n\geq m\geq 1$. From \eqref{g-n} we have that $g_n(r)=g_m(r)=r^q$ for $r\geq 1/2m$. Since $u_m\geq \frac{1}{2m}\geq \frac{1}{2n}$ and $u_n\geq \frac{1}{2n}$, we observe that $u_m$ and $u_n$ both satisfy the same integral equation with initial values $u_0+\frac{1}{m}$ and $u_0+\frac{1}{n}$, respectively. By Lemma \ref{Lip}, it follows that $u_m\geq u_n$. Therefore, by defining $u(x,t)=\displaystyle\lim_{n\to \infty} u_n(x,t)$ and the monotone convergence theorem we have that 
$$u(t)=\mathbf{S}(t)u_0+\int_0^t \mathbf{S}_\gamma(t-\sigma) u^q(\sigma) d\sigma,$$
for $t\in [0,T]$. Since $0\leq u\leq u_1$ we conclude that $u\in L^\infty((0,T); L^\infty (\mathbb{R}^N))$. 

By \eqref{S-t-C-0-1}, we know that ${\mathbf S}(\cdot) u_0 \in C([0,\infty);C_0(\mathbb{R}^N))$. To establish that $u \in C([0,\infty); C_0(\mathbb{R}^N))$, it suffices to show that $v_2 \in C([0,\infty); C_0(\mathbb{R}^N))$, where $v_2(t)$ is defined as
$$ v_2(t)=\int_0^t \mathbf{S}_\gamma(t-\sigma) u^q(\sigma) d\sigma,$$ for $t\geq 0$. In fact, by Proposition \ref{Key} (for $q_2=\infty$) we have $v_2(t) \in C_0(\mathbb{R}^N)$ for all $t>0$. Moreover, for $0\leq \tau < t< T$
$$
\begin{array}{ll}
\displaystyle \left \| \int_\tau^t\mathbf{S}_\gamma (t-\sigma) u^q(\sigma) d\sigma  \right \|_\infty &\leq C \displaystyle\|u\|^q_{L^\infty (\R^N \times (0,T))} \int_\tau ^t (t-\sigma)^{-\gamma/2} d\sigma \\
& = C \|u\|^q_{L^\infty (\R^N \times (0,T))}\, (t-\tau)^{1-\gamma/2} \to 0
\end{array}
$$
as $(t-\tau) \to 0$. Thus,  the continuity of $v_2$ follows as \cite[p. 285]{Weissler2}.  

}
\end{proof}

\section{Proof of Theorem \ref{UN}}
\label{S4}
This section is devoted to the proof of Theorem \ref{UN}. We need of the following preliminary result.
\begin{lemma} \label{Lema.subs} {Let $0<\gamma<\gamma_N$, $\gamma_N$ defined in \eqref{G.N},  $0<q<1$, and}
\begin{eqnarray*}
 w(x,t)&=&\; \left\{
\begin{array}{cllll}\lambda t^{1/(1-q)}\left (|x|+\sqrt{t}\right )^{-\gamma/(1-q)}\quad&\mbox{if}&\quad
t>0,\;x\in\R^N,\\ 0 \quad
&\mbox{if}&\quad t=0,\;x\in\R^N.
\end{array}
\right.
\end{eqnarray*}
Then, $w\in C([0,\infty); C_0(\mathbb{R}^N))$ and 
\begin{equation}\label{subs}
{w(t)\leq  \displaystyle\int_0^t {\mathbf{S}_\gamma}(t-\sigma) w^q(\sigma)d\sigma,\quad\mbox{for all}\;\; t>0.}
\end{equation}
 {Here, $\lambda^{1-q} = (1-q) \eta_0 $, with $\eta_0$ defined in accordance with \eqref{eta-0}.}
\end{lemma}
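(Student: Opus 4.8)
The plan is to verify the two claims of the lemma separately: first the regularity statement $w\in C([0,\infty);C_0(\mathbb R^N))$, then the subsolution inequality \eqref{subs}. For the regularity, I would check that for each fixed $t>0$ the function $x\mapsto w(x,t)=\lambda t^{1/(1-q)}(|x|+\sqrt t)^{-\gamma/(1-q)}$ is continuous and decays to $0$ as $|x|\to\infty$, hence lies in $C_0(\mathbb R^N)$; note $\|w(t)\|_\infty = \lambda t^{1/(1-q)}\,t^{-\gamma/(2(1-q))}=\lambda\,t^{(2-\gamma)/(2(1-q))}\to 0$ as $t\to 0^+$ since $\gamma<2$, which also gives continuity at $t=0$. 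Continuity in $t$ on $(0,\infty)$ in the sup norm is a routine uniform-continuity argument in the two variables $(t,\sqrt t)$ appearing in the explicit formula, using $\gamma<2$ to control behavior near $t=0$.

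The heart of the lemma is the integral inequality \eqref{subs}. The strategy is to compute $\mathbf S_\gamma(t-\sigma)w^q(\sigma)$ from below. Writing $w^q(\sigma,y)=\lambda^q\sigma^{q/(1-q)}(|y|+\sqrt\sigma)^{-\gamma q/(1-q)}$, I want to bound
$$
\bigl[\mathbf S_\gamma(t-\sigma)w^q(\sigma)\bigr](x) = (4\pi(t-\sigma))^{-N/2}\lambda^q\sigma^{q/(1-q)}\int_{\mathbb R^N} e^{-\frac{|x-y|^2}{4(t-\sigma)}}|y|^{-\gamma}(|y|+\sqrt\sigma)^{-\gamma q/(1-q)}\,dy
$$
from below. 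The key pointwise estimate I expect to need is $|y|^{-\gamma}(|y|+\sqrt\sigma)^{-\gamma q/(1-q)}\ge c\,(|y|+\sqrt\sigma)^{-\gamma-\gamma q/(1-q)} = c\,(|y|+\sqrt\sigma)^{-\gamma/(1-q)}$, so that after rescaling $y = x + 2\sqrt{t-\sigma}\,z$ the integral becomes (up to constants) $\int e^{-|z|^2/4}(|x+2\sqrt{t-\sigma}z|+\sqrt\sigma)^{-\gamma/(1-q)}\,dz$. Then I would use Lemma \ref{maxim} (with $f(r)=(r+\text{const})^{-\gamma/(1-q)}$, which is $C^1$ and nonincreasing) together with scaling to reduce to an estimate centered at $x=0$, or alternatively bound $|x+2\sqrt{t-\sigma}z|+\sqrt\sigma \le |x| + \sqrt t + (\text{something})|z|$ and factor out $(|x|+\sqrt t)^{-\gamma/(1-q)}$, picking up a convergent Gaussian integral of the form $\int e^{-|z|^2/4}(1+|z|)^{-\gamma/(1-q)}dz$ — precisely the integral defining $\eta_0$ in \eqref{eta-0}. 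Collecting powers of $\sigma$ and $t-\sigma$ and integrating in $\sigma\in(0,t)$ should produce $t^{1+q/(1-q)} = t^{1/(1-q)}$ times $(|x|+\sqrt t)^{-\gamma/(1-q)}$, and the constant $\lambda^q\eta_0\cdot(\text{Beta-type }\sigma\text{-integral})$ must be shown to equal exactly $\lambda = \lambda^q\cdot\lambda^{1-q}=\lambda^q(1-q)\eta_0$; this forces the $\sigma$-integral to contribute the factor $(1-q)$, which I would verify by an explicit substitution $\sigma = t s$ reducing it to a Beta function $B(\cdot,\cdot)$ and checking the arithmetic.

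I expect the main obstacle to be getting the homogeneity/scaling bookkeeping exactly right so that the resulting constant is genuinely $\lambda$ and not merely $\gtrsim\lambda$ — in particular, choosing the lower bounds (for $|y|^{-\gamma}$ versus $(|y|+\sqrt\sigma)^{-\gamma}$, and for the Gaussian-shifted argument) sharply enough that no constant is lost, while still keeping all the integrals of the clean closed form needed to match \eqref{eta-0}. A secondary technical point is justifying that the $\sigma$-integral converges at both endpoints: near $\sigma=0$ the factor $\sigma^{q/(1-q)}$ is integrable, and near $\sigma=t$ the factor $(t-\sigma)^{-\gamma/2}$ coming from the heat-kernel scaling is integrable precisely because $\gamma<\gamma_N\le 2$, which is where the hypothesis $\gamma<\gamma_N$ enters. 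Once the pointwise lower bound $w(x,t)\le \lambda^q(1-q)\eta_0\,t^{1/(1-q)}(|x|+\sqrt t)^{-\gamma/(1-q)} = w(x,t)$ is obtained, \eqref{subs} follows.
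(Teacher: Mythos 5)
Your proposal is correct and follows essentially the same route as the paper: bound $\|w(t)\|_\infty=\lambda t^{(2-\gamma)/(2(1-q))}$ for the continuity at $t=0$, then substitute $y=x+\sqrt{t-\sigma}\,z$ in $\mathbf S_\gamma(t-\sigma)w^q(\sigma)$, use $|y|+\sqrt\sigma\le(|x|+\sqrt t)(1+|z|)$ (and $|y|^{-\gamma}\ge(|y|+\sqrt\sigma)^{-\gamma}$) to extract exactly $\eta_0\,(|x|+\sqrt t)^{-\gamma/(1-q)}$, and finish with $\int_0^t\sigma^{q/(1-q)}d\sigma=(1-q)t^{1/(1-q)}$ and $\lambda^{1-q}=(1-q)\eta_0$. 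The only cosmetic deviation is that Lemma \ref{maxim} is not needed here (the paper reserves it for the uniqueness argument), and the direct bound you describe as the alternative is what the paper actually does.
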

\begin{proof}[{Proof of Lemma \ref{Lema.subs}}] It is obvious that $w\in C((0,\infty); C_0(\mathbb{R}^N))$. Furthermore, we have the inequality $0\leq w(x,t) \leq \lambda t^{(1-\gamma/2)[1/(1-q)]}$. As a result, it follows that $\displaystyle\lim_{t\to 0^{+}}\|w(t)\|_{\infty}=0$, leading to the conclusion that $w\in C([0,\infty); C_0(\mathbb{R}^N))$. 
{To establish the inequality \eqref{subs}, we proceed as follows. Due to the definition of $w$, we get 
\begin{equation*}
\begin{split}
&\displaystyle\int_0^t {\mathbf{S}_\gamma}(t-\sigma) w^q(\sigma)d\sigma= \lambda^q \displaystyle\int_0^t {\mathbf S}_\gamma(t-\sigma) (|\cdot|+\sqrt{\sigma})^{-\gamma q/(1-q)} \sigma^{q/(1-q)} d\sigma\\
&=\lambda^q \displaystyle\int_0^t [4\pi (t-\sigma)]^{-N/2} \sigma^{q/(1-q)} \displaystyle\int_{\mathbb{R}^N} \exp \left( -\frac{|x-y|^2}{4(t-\sigma)}\right) |y|^{-\gamma} (|y|+\sqrt{\sigma})^{-\gamma q/(1-q)} dy d\sigma \\
&=\lambda^q (4\pi)^{-N/2} \displaystyle\int_0^t \sigma^{q/(1-q)}\int_{\mathbb{R}^N} \exp\left(-\frac{|z|^2}{4} \right) \frac{|x+\sqrt{t-\sigma} z|^{-\gamma}}{(|x+ \sqrt{t-\sigma}z|+  \sqrt{\sigma})^{\gamma q/(1-q)}} dz  d\sigma\\
&\geq \lambda^q (4\pi)^{-N/2} \displaystyle\int_0^t \sigma^{q/(1-q)} \displaystyle\int_{\mathbb{R}^N} \exp\left (-\frac{|z|^2}{4}\right)\frac{(|x|+\sqrt{t}|z|)^{-\gamma}} {(|x|+ \sqrt{t} |z|+\sqrt{t})^{\gamma q/(1-q)}}dzd\sigma.
\end{split}
\end{equation*}
}
An elementary computation gives 
$$
{|x|+\sqrt{t}|z|\leq |x|+ \sqrt{t} |z|+\sqrt{t}\leq (|x|+\sqrt{t})(1+|z|).}
$$
Thus,

\begin{eqnarray*}
{\displaystyle\int_0^t {\mathbf{S}_\gamma}(t-\sigma) w^q(\sigma)d\sigma} &\geq& \lambda^q \eta_0 \cdot  (|x|+\sqrt{t})^{-\gamma -\gamma q/(1-q)}\displaystyle\int_0^t \sigma^{q/(1-q)}d\sigma\\
&=&\lambda^q (1-q) \eta_0 \cdot (|x|+\sqrt{t})^{-\gamma/(1-q)} t^{1/(1-q)}\\
&=&w(x,t).
\end{eqnarray*}
\end{proof}
\noindent{\bf Proof of Theorem \ref{UN} (i).} The claim that $w$ is a sub-solution of problem (\ref{In.uno}) follows from Lemma \ref{Lema.subs}. To establish the estimate \eqref{Est.pr}, we adapt the arguments employed in \cite{AE87}, considering two situations: 

\noindent{\bf a)} Assume first that $u_0(x)\geq \tilde C \exp(-a|x|^2)$ for all $x\in \mathbb{R}^N$ and some constants $\tilde C, a>0$. {Subsequently, utilizing \eqref{Identity}, we obtain}
$$[{\mathbf S}(t)u_0](x)\geq \tilde C {(2\pi)^N}\,(1+4at)^{-N/2} \exp \left (\frac{-a|x|^2}{1+4a t}\right ){=\mathsf{C}}\,(1+4at)^{-N/2} \exp \left (\frac{-a|x|^2}{1+4a t}\right ),$$
where $\mathsf{C}=\tilde C (2\pi)^{N}$. Thus,
$$
\begin{array}{l}
{\mathbf S}(t-\sigma) |\cdot|^{-\gamma} u^q(\sigma)\\
\geq {\mathbf S} (t-\sigma)|\cdot|^{-\gamma}[{\mathbf S} (\sigma) u_0]^q\\
\geq\, {\mathsf{C}}^q[4\pi (t-\sigma)]^{-N/2} (1+4a\sigma)^{-N q/2} \displaystyle\int_{\mathbb{R}^N} \exp\left[ -\frac{|x-y|^2}{4(t-\sigma)}\right] |y|^{-\gamma}  \exp \left( -\frac{aq |y|^2}{1+4a\sigma}\right) dy.\\
\end{array}
$$
Employing the identity
$$\frac{|x-y|^2}{t}+\frac{|y|^2}{s}=\frac{s}{t(t+s)}\left|x-\frac{(s+t)y}{s} \right|^2+\frac{|x|^2}{t+s},$$ which is valid for all  $x,y \in \mathbb{R}^N$ and $t,s>0$, we get
\begin{equation*}
\frac{|x-y|^2}{4(t-\sigma)}+ \frac{aq|y|^2}{(1+4 a \sigma)}
=\frac{1+4a\sigma}{4(t-\sigma)h(t,\sigma)}\left|x-\frac{h(t,\sigma)}{1+4a\sigma} y\right|^2 + \frac{aq}{h(t,\sigma)}|x|^2,
\end{equation*}
where $h(t,\sigma)=4aq(t-\sigma)+1+4a\sigma.$ Thus,
\begin{equation}
\begin{array}{l}
{\mathbf S} (t-\sigma) |\cdot|^{-\gamma} u^q(\sigma)\\
\geq {\mathsf{C}}^q[4\pi (t-\sigma)]^{-N/2} (1+4a\sigma)^{-N q/2}\exp \left( - \frac{aq}{h(t,\sigma)}|x|^2\right) \cdot\\
\hskip20pt\displaystyle\int_{\mathbb{R}^N} \exp \left(-\frac{1+4a\sigma}{4(t-\sigma)h(t,\sigma)}\left|x-\frac{h(t,\sigma)}{1+4a\sigma} y\right|^2 \right)|y|^{-\gamma}dy\\
= {\mathsf{C}}^q[4\pi (t-\sigma)]^{-N/2} (1+4a\sigma)^{-N q/2}\exp \left[ - \frac{aq}{h(t,\sigma)}|x|^2\right] \cdot \left( \frac{1+4a\sigma}{h(t,\sigma)}\right)^{N-\gamma}\\
\hskip20pt\displaystyle\int_{\mathbb{R}^N} \exp \left[-\frac{1+4a\sigma}{4(t-\sigma)h(t,\sigma)}\left|x- y\right|^2 \right]|y|^{-\gamma}dy\\
\geq {\mathsf{C}}^q [4\pi (t-\sigma)]^{-N/2} (1+4a\sigma)^{-N q/2+N} h(t,\sigma)^{-N}\exp \left[ - \frac{aq}{h(t,\sigma)}|x|^2\right] \cdot\\
\displaystyle\int_{\mathbb{R}^N} \exp \left (-\frac{|x-y|^2}{4(t-\sigma)} \right )|y|^{-\gamma}dy, \\
\end{array}
\label{S.c}
\end{equation}
since $h(t,\sigma)\geq 1+4a\sigma$.
On the other hand, since $4aqt +1 \leq h(t,\sigma) \leq 4at+1$ for $0\leq \sigma \leq t$, we have
\begin{equation}
\begin{split}
u(t)& \geq \displaystyle\int_0^t {\mathbf S}(t-\sigma)|\cdot|^{-\gamma}u^q(\sigma)d\sigma\\
&\geq \mathsf{C}^q (4\pi)^{-N/2} (1+4at)^{-N}\exp \left( - \frac{aq}{4aqt+1}|x|^2\right) \int_0^t \int_{\mathbb{R}^N} \exp \left(-\frac{|w|^2}{4}\right) \cdot \\
& \hskip40pt |x+\sqrt{t-\sigma}w|^{-\gamma} dw d\sigma  \\
& \geq {{C}_1} t (1+4at)^{-N}\exp \left( - \frac{aq}{4aqt+1}|x|^2\right)\displaystyle (|x|+ \sqrt{t})^{-\gamma}\\
\end{split}
\label{S.a}
\end{equation}
where ${C_1}=\mathsf{C}^q (4\pi)^{-N/2}\int_{\mathbb{R}^N} \exp(-|w|^2/4)(1+|w|)^{-\gamma}dw\geq \mathsf{C^q}\eta_0$, and $\eta_0$ is given by (\ref{eta-0}).
We claim that
\begin{equation}
u(t)\geq C_k t^{\frac{1-q^k}{1-q}} (1+ 4aq^{k-1} t)^{-N} \exp \left (-\frac{aq^k}{4aq^k t+1}|x|^2\right )
(|x|+\sqrt{t})^{-\gamma\frac{1-q^k}{1-q}},
\label{S.b}
\end{equation}
where $C_{k}\geq {C}_1^{q^k} \eta_0^{(1-q^{k-1})/(1-q)}(1-q)^{(1-q^{k-1})/(1-q)}$ for $k\geq 1$.
From estimate \eqref{S.a}, we have \eqref{S.b} for $k=1$. Now, assume that \eqref{S.b} holds.
Arguing as the derivation of \eqref{S.c}, define $h_{k+1}(t,\sigma)=4aq^{k+1}(t-\sigma) + 4aq^k\sigma +1$, for $0<\sigma<t$. Note that 
$$4aq^{k+1}t+1 \leq h_{k+1}(t,\sigma)\leq 4aq^k t+1,$$ 
since $\partial_\sigma h_{k+1}(t,\sigma)=4aq^k(1-q)\geq 0$. {Thanks to the inequalities
$$h_{k+1}(t,\sigma)\geq 4aq^k \sigma+1,\quad 1+4aq^k \sigma \geq (1+4aq^{k-1}\sigma)^q,$$ we get}
$$
\begin{array}{l}
{\mathbf S} (t-\sigma)|\cdot|^{-\gamma} u^q(\sigma) \\
\geq C_k^q[4\pi (t-\sigma)]^{-N/2} \sigma^{q\frac{1-q^k}{1-q}} (1+4a q^{k-1}\sigma)^{-N q}\\ \displaystyle\int_{\mathbb{R}^N } \exp\left( -\frac{|x-y|^2}{4(t-\sigma)} \right)|y|^{-\gamma} \exp\left( -\frac{aq^{k+1}}{4aq^k \sigma+1}|y|^2 \right)(|y|+\sqrt{\sigma})^{-\gamma q \frac{1-q^k}{1-q}}dy\\
=C_k^q[4\pi (t-\sigma)]^{-N/2} \sigma^{q\frac{1-q^k}{1-q}} (1+4aq^{k-1}\sigma)^{-N q} \exp \left( - \frac{aq^{k+1}|x|^2}{h_{k+1} (t,\sigma)}\right)\cdot\\
\displaystyle\int_{\mathbb{R}^N} \exp \left(- \frac{4aq^k\sigma+1}{4(t-\sigma) h_{k+1}(t,\sigma)} \left |x- \frac{h_{k+1}(t,\sigma)}{4aq^k\sigma+1}y\right |^2\right) |y|^{-\gamma} (|y|+\sqrt{\sigma})^{-\gamma q \frac{1-q^{k}}{1-q}}dy\\
\geq C_k^q[4\pi (t-\sigma)]^{-N/2} \sigma^{q\frac{1-q^k}{1-q}}  (1+4aq^{k-1}\sigma)^{-N q} \exp \left( - \frac{aq^{k+1}|x|^2}{h_{k+1} (t,\sigma)}\right)\cdot\\
\displaystyle\int_{\mathbb{R}^N} \exp \left(- \frac{1}{4(t-\sigma)} \left |x- \frac{h_{k+1}(t,\sigma)}{4aq^k\sigma+1}y\right |^2\right) |y|^{-\gamma} (|y|+\sqrt{\sigma})^{-\gamma q \frac{1-q^{k}}{1-q}}dy\\
\geq C_k^q[4\pi (t-\sigma)]^{-N/2} \sigma^{q\frac{1-q^k}{1-q}} (1+4aq^{k-1}\sigma)^{-N q} \left (\frac{4aq^k\sigma+1}{h_{k+1}(t,\sigma)}\right )^{N-\gamma} \exp \left( - \frac{aq^{k+1}|x|^2}{h_{k+1} (t,\sigma)}\right) \\
\displaystyle\int_{\mathbb{R}^N} \exp \left( -\frac{|x-z|^2}{4(t-\sigma)}\right)  |z|^{-\gamma}\left (\frac{4aq^k\sigma+1}{h_{k+1}(t,\sigma)}|z|+\sqrt{\sigma} \right)^{-\gamma q \frac{1-q^{k}}{1-q}}dz\\
\geq C_k^q [4\pi(t-\sigma)]^{-N/2}  \sigma^{q\frac{1-q^k}{1-q}} (1+ 4aq^{k-1} \sigma)^{-N q}(1+4aq^{k}\sigma)^{N} (1+4aq^kt)^{-N} \exp \left (-\frac{aq^{k+1}|x|^2}{4aq^{k+1} t+1}\right) \\
\displaystyle\int_{\mathbb{R}^N} \exp \left( -\frac{|x-z|^2}{4(t-\sigma)}\right)  |z|^{-\gamma}\left (|z|+\sqrt{\sigma} \right)^{-\gamma q \frac{1-q^{k}}{1-q}}dz\\
\geq C_k^q (4\pi)^{-N/2}  \sigma^{q\frac{1-q^k}{1-q}}  (1+4aq^kt)^{-N} \exp \left (-\frac{aq^{k+1}|x|^2}{4aq^{k+1} t+1}\right )  \\
\displaystyle\int_{\mathbb{R}^N} \exp(-\frac{|w|^2}{4})|x+\sqrt{t-\sigma} w|^{-\gamma} (|x+ \sqrt{t-\sigma} w|+\sqrt{\sigma})^{-\gamma q \frac{1-q^k}{1-q}}dw\\
\geq C_k^q (4\pi)^{-N/2}  \sigma^{q\frac{1-q^k}{1-q}}  (1+4aq^kt)^{-N} \exp \left (-\frac{aq^{k+1}|x|^2}{4aq^{k+1} t+1}\right )  \\
\displaystyle\int_{\mathbb{R}^N} \exp(-\frac{|w|^2}{4})(|x|+\sqrt{t} |w|)^{-\gamma} (|x|+ \sqrt{t} |w|+\sqrt{t})^{-\gamma q \frac{1-q^k}{1-q}}dw\\
\geq C_k^q \eta_k  \sigma^{q\frac{1-q^k}{1-q}}  (1+4aq^kt)^{-N} \exp \left (-\frac{aq^{k+1}|x|^2}{4aq^{k+1} t+1}\right )  (|x|+\sqrt{t})^{-\gamma \frac{1-q^{k+1}}{1-q}},\\
\end{array}
$$
{where
$$\eta_k=(4\pi)^{-N/2} \displaystyle\int_{\mathbb{R}^N} \exp \left(-\frac{|w|^2}{4}\right)  (1+|w|)^{-\gamma  \frac{1-q^{k+1}}{1-q}} \geq \eta_0.$$
} Therefore,
$$u(t)\geq C_{k+1} t^{\frac{1-q^{k+1}}{1-q}} (1+4aq^k t)^{-N} \exp(-\frac{aq^{k+1}|x|^2}{4aq^{k+1}t+1}) (|x|+\sqrt{t})^{-\gamma \frac{1-q^{k+1}}{1-q}},$$
where
$$C_{k+1}=\eta_k C_k^q  \frac{1-q}{1-q^{k+1}}\geq (1-q) \eta_0  C_k^q.$$
Hence, we obtain
$C_{k+1}\geq {{C_1}}^{q^{k+1}} \eta_0^{(1-q^k)/(1-q)} (1-q)^{(1-q^k)/(1-q)}.$ Therefore, estimate (\ref{S.b}) holds for $k+1$ and the induction process is complete. Letting $k\to \infty$ we obtain the desired estimate (\ref{Est.pr}).\\

\noindent{\bf b)} {In the general case, consider $\epsilon > 0$, and define $u_\epsilon(t)$ as $u(t+\epsilon)$. Based on the integral equation \eqref{integral} and thanks to \eqref{S-t-0}, we infer $u_\epsilon(0)=u(\epsilon) \geq \mathbf{S}(\epsilon)u_0\geq C \exp(-a |x|^2)$ for some positive constants $C$ and $a$. By applying item (i), we  conclude that $u_\epsilon(t)\geq [(1-q) \eta_0 ]^{1/(1-q)} t^{1/(1-q)} (|x|+\sqrt{t})^{-\gamma/(1-q)}.$ Hence, we obtain the following inequality:
$$u(t)=u((t-\epsilon)+\epsilon)\geq [(1-q) \eta_0]^{1/(1-q)}\, (t-\epsilon)^{1/(1-q)}\, (|x|+\sqrt{t-\epsilon})^{-\gamma/(1-q)}.$$
As a result, by taking the limit as $\epsilon$ approaches $0$, the desired estimate \eqref{Est.pr} is obtained.}\\
\noindent{\bf Proof of Theorem \ref{UN} (ii).} {The uniqueness can be deduced from the lemma presented below.}
\begin{lemma} \label{Lem.PC}{ Consider $0 < q < 1$ and non-negative functions $u, v \in C([0,\infty);C_0(\mathbb{R}^N))$ satisfying the inequalities:
$$
\left \{ \begin{array}{rl}
u(t) \geq {\mathbf S}(t)u_0+\displaystyle\int_0^t \mathbf{S}_\gamma(t-\sigma) u^q(\sigma)d\sigma,\\\\
v(t) \leq {\mathbf S}(t)v_0+\displaystyle\int_0^t \mathbf{S}_\gamma(t-\sigma) v^q(\sigma)d\sigma,
\end{array}\right.
$$
where $u_0 \geq v_0 \geq 0$ and $u_0 \neq 0$. Then, there exists $\gamma^* > 0$ such that $u(t) \geq v(t)$ for all $t \geq 0$ whenever $0 < \gamma < \gamma^*.$}
\end{lemma}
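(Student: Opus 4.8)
The plan is to set $\omega(t):=[\,v(t)-u(t)\,]_+$, $\phi(t):=\|\omega(t)\|_\infty$ (so that $\phi\in C([0,\infty))$ with $\phi(0)=\|[\,v_0-u_0\,]_+\|_\infty=0$), and to prove $\phi\equiv 0$. Subtracting the two hypotheses and using $v_0\le u_0$ (hence ${\mathbf S}(t)(v_0-u_0)\le 0$), $[a+b]_+\le[a]_++[b]_+$, and the observation $[\,{\mathbf S}_\gamma(\tau)\varphi\,]_+\le{\mathbf S}_\gamma(\tau)[\,\varphi\,]_+$ already used in the proof of Lemma~\ref{Lip}, one gets
\[
\omega(t)\le\int_0^t{\mathbf S}_\gamma(t-\sigma)\,[\,v^q(\sigma)-u^q(\sigma)\,]_+\,d\sigma,\qquad t\ge 0.
\]
For the bracket I would keep three bounds handy: \emph{(a)} the crude one $[\,v^q-u^q\,]_+\le v^q$; \emph{(b)} the concavity bound $[\,v^q-u^q\,]_+\le\big([\,v-u\,]_+\big)^q=\omega^q$, i.e.\ \eqref{posi-Conc-q}; and \emph{(c)} the sharper linearized bound $[\,v^q-u^q\,]_+\le q\,u^{q-1}\omega$, which follows from the tangent-line inequality $s^q\le 1+q(s-1)$ for $s\ge1$ (concavity of $r\mapsto r^q$ at $r=1$) applied with $s=v/u$ on $\{v>u\}$. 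Bound \emph{(c)} needs $u>0$ on $\mathbb{R}^N\times(0,\infty)$; this holds because the proof of Theorem~\ref{UN}(i) uses only $u(t)\ge{\mathbf S}(t)u_0$ and $u(t)\ge\int_0^t{\mathbf S}(t-\sigma)|\cdot|^{-\gamma}u^q(\sigma)\,d\sigma$, both valid for the present super-solution $u$ (recall $u_0\ne0$), so in fact $u(x,t)\ge w(x,t)$, with $w$ the barrier of \eqref{Est.pr}.

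\emph{Step 1: raising the vanishing order of $\phi$ at $t=0$.} From \emph{(a)} and \eqref{SS} with $q_1=q_2=\infty$ (admissible since $\gamma<N$) I would first obtain $\phi(t)\le C\,t^{1-\gamma/2}$ on $[0,T]$, with $C$ depending on $\|v\|_{L^\infty(\mathbb{R}^N\times(0,T))}^q,\gamma,q$. Inserting this into the inequality $\phi(t)\le C\int_0^t(t-\sigma)^{-\gamma/2}\phi^q(\sigma)\,d\sigma$ coming from \emph{(b)} and iterating: the associated operator $T_\gamma$ sends $c\,t^\alpha$ to $Cc^q B\big(1-\tfrac{\gamma}{2},\alpha q+1\big)\,t^{\,1-\gamma/2+q\alpha}$ ($B$ the Euler Beta function), so each application pushes the exponent $\alpha$ to $1-\tfrac{\gamma}{2}+q\alpha$; since $\phi\le\phi_0$ and $\phi\le T_\gamma[\phi]$ with $T_\gamma$ monotone, $\phi\le T_\gamma^{\,n}[\phi_0]$. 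Thus, for every fixed $n$ and every $t\in[0,T]$,
\[
\phi(t)\le c_n\,t^{\alpha_n},\qquad \alpha_n=\Big(1-\tfrac{\gamma}{2}\Big)\,\frac{1-q^{\,n+1}}{1-q}\ \longrightarrow\ \frac{1-\gamma/2}{1-q}\quad(n\to\infty),
\]
with $c_n<\infty$; note that $\alpha_n$ tends to the decay exponent of $\|w(t)\|_\infty=w(0,t)$.

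\emph{Step 2: a linearized singular Gronwall inequality, then conclusion.} Combining \emph{(c)} with $u\ge w$, so that $u^{q-1}(\sigma,y)\le w(y,\sigma)^{q-1}=\big((1-q)\eta_0\sigma\big)^{-1}(|y|+\sqrt\sigma)^\gamma$, and $\omega(\sigma,\cdot)\le\phi(\sigma)$, I get
\[
\omega(x,t)\le\frac{q}{(1-q)\eta_0}\int_0^t\frac{\phi(\sigma)}{\sigma}\,\Big[{\mathbf S}_\gamma(t-\sigma)\big((|\cdot|+\sqrt\sigma)^\gamma\big)\Big](x)\,d\sigma.
\]
Since $y\mapsto|y|^{-\gamma}(|y|+\sqrt\sigma)^\gamma=(1+\sqrt\sigma/|y|)^\gamma$ is radial and non-increasing, Lemma~\ref{maxim} (applied to $\min\{(1+\sqrt\sigma/|y|)^\gamma,M\}$, then $M\to\infty$, to handle the singularity at $y=0$) shows that ${\mathbf S}_\gamma(t-\sigma)\big((|\cdot|+\sqrt\sigma)^\gamma\big)$ is maximal at $x=0$; evaluating there and rescaling $y=\sqrt{t-\sigma}\,z$ yields the value $h_\gamma\big(\sqrt{\sigma/(t-\sigma)}\,\big)$, where $h_\gamma(\rho):=(4\pi)^{-N/2}\int_{\mathbb{R}^N}e^{-|z|^2/4}\big(1+\rho/|z|\big)^\gamma dz$, so that $h_\gamma(0)=1$ and $\eta_0=\eta_0(\gamma)\to1$ as $\gamma\to0$. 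Taking the supremum in $x$,
\[
\phi(t)\le\frac{q}{(1-q)\eta_0(\gamma)}\int_0^t\frac{h_\gamma\big(\sqrt{\sigma/(t-\sigma)}\,\big)}{\sigma}\,\phi(\sigma)\,d\sigma=:L_\gamma[\phi](t),\qquad t\in[0,T].
\]
The operator $L_\gamma$ is linear, positive and monotone, and by the substitution $\sigma=ts$ it sends $c\,t^\alpha$ to $R_\gamma(\alpha)\,c\,t^\alpha$ with $R_\gamma(\alpha)=\frac{q}{(1-q)\eta_0(\gamma)}\int_0^1 s^{\alpha-1}h_\gamma\big(\sqrt{s/(1-s)}\,\big)\,ds$; all the integrals above converge precisely because $0<\gamma<\gamma_N$ (so $|z|^{-\gamma}$ is Gaussian-integrable and $(1-s)^{-\gamma/2}\in L^1(0,1)$). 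Now fix $n$ with $q^{\,n+1}<1-q$; then dominated convergence gives $R_\gamma(\alpha_n)\to q/(1-q^{\,n+1})<1$ as $\gamma\to0^+$, so there is $\gamma^*\in(0,\gamma_N)$ such that $R_\gamma(\alpha_n)<1$ whenever $0<\gamma<\gamma^*$. For such $\gamma$, feeding the bound $\phi\le c_n t^{\alpha_n}$ of Step~1 into $L_\gamma$ and iterating gives $\phi(t)\le R_\gamma(\alpha_n)^m\,c_n\,t^{\alpha_n}$ for all $m\ge1$, hence $\phi\equiv0$ on $[0,T]$; since $T>0$ is arbitrary, $u\ge v$ on $[0,\infty)$.

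\emph{Main obstacle.} The delicate point is that the linearized inequality of Step~2, fed only with the crude bound $\phi\le C t^{1-\gamma/2}$, produces a contraction factor that tends to $q/(1-q)$ as $\gamma\to0$, which is below $1$ only for $q<\tfrac12$. The crucial device is therefore the exponent bootstrap of Step~1, which raises the a priori vanishing order of $\phi$ essentially up to that of the intrinsic barrier $w$; this replaces $q/(1-q)$ by $q/(1-q^{\,n+1})\to q<1$, valid for \emph{every} $q\in(0,1)$. The remaining technical nuisance — the singular weight $(|y|+\sqrt\sigma)^\gamma|y|^{-\gamma}$ inside ${\mathbf S}_\gamma$ — is absorbed by the rearrangement Lemma~\ref{maxim}, and it is here, together with the local integrability of $(t-\sigma)^{-\gamma/2}$, that the hypothesis $\gamma<\gamma_N$ is really used.
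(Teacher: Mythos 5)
Your proof is correct and follows essentially the same strategy as the paper's: set $z=[v-u]_+$, first derive a power-law vanishing rate of $\|z(t)\|_\infty$ at $t=0$ from the concavity estimate \eqref{posi-Conc-q}, then linearize via $[v^q-u^q]_+\le q\,u^{q-1}z$ together with the lower bound $u\ge w$ of \eqref{Est.pr} and Lemma \ref{maxim}, and iterate the resulting linear singular integral inequality, whose norm tends to a limit below $1$ as $\gamma\to 0$. The only differences are cosmetic: the paper reaches the exponent $\frac{1-\gamma/2}{1-q}$ in a single step from $M(t)^{1-q}\le \frac{2\eta_1}{2-\gamma}\,t^{1-\gamma/2}$ instead of your exponent bootstrap (so its contraction factor tends to $q$ rather than $q/(1-q^{\,n+1})$), and it bounds your kernel $h_\gamma\big(\sqrt{\sigma/(t-\sigma)}\big)$ explicitly by $\eta_2\,t^{\gamma/2}(t-\sigma)^{-\gamma/2}$.
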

\begin{proof}[{Proof of Lemma \ref{Lem.PC}}]
Define $z(t) = [v(t) - u(t)]_+$ for $t \geq 0$. Employing \eqref{posi-Conc-q}, we obtain
$$z(t)  \leq \displaystyle\int_0^t {\mathbf{S}}_\gamma(t-\sigma) [v^q(\sigma)-u^q(\sigma)]_{+}d\sigma \leq \displaystyle\int_0^t {\bf S}_\gamma(t-\sigma)z^q(\sigma) d\sigma.$$

Thus, by Lemma \ref{maxim}
$$
\|z(t)\|_{\infty}\leq \eta_1 \bigg(\displaystyle\int_0^t (t-\sigma)^{-\gamma/2} d\sigma\bigg) \left( \sup_{0\leq \sigma \leq t} \{\|z(\sigma)\|_{\infty}\}\right)^q,
$$
where 
\begin{equation} \label{gamma-1}
\eta_1=\eta_1(\gamma)=(4\pi)^{-N/2}\displaystyle\int_{\mathbb{R}^N} \exp \left( -\frac{|y|^2}{4}\right)|y|^{-\gamma}dy.  
\end{equation}

Hence,
\begin{equation}\label{Cec.un}
\|z(t)\|_{\infty} \leq \left ( \frac{2\eta_1}{2-\gamma} t^{1-\gamma/2}\right)^{1/(1-q)}.
\end{equation}

On the other hand, by the mean value theorem there exists $u_\theta=\theta u+(1-\theta)v$ with $\theta \in (0,1)$ such that
$v^q(\sigma)-u^q(\sigma)=q u_\theta^{q-1}(\sigma)[v(\sigma)-u(\sigma)].$ Thus, by the estimate (\ref{Est.pr})

$$
\begin{array}{ll}
z(t)& \leq \displaystyle\int_0^t {\mathbf{S}}_\gamma(t-\sigma) [v^q(\sigma)-u^q(\sigma)]_{+}d\sigma\\
&\leq q\displaystyle\int_0^t \mathbf{S}_\gamma (t-\sigma) \left\{ u(\sigma)^{q-1} [v(\sigma)-u(\sigma))]_{+}\right\}\\
&\leq  q [ (1-q) \eta_0]^{-1} \displaystyle\int_0^t \sigma^{-1} \mathbf{S}_\gamma(t-\sigma)  (|\cdot|+ \sqrt{\sigma})^{\gamma}z(\sigma) d\sigma.
\end{array}
$$
Hence,
\begin{equation}\label{Mar.z}
    \|z(t)\|_\infty \leq  q[(1-q) \eta_0 ]^{-1}\displaystyle\int_0^t \sigma^{-1} \|{\mathbf{S}}_\gamma (t-\sigma)(|\cdot|+\sqrt{\sigma})^\gamma\|_\infty \|z(\sigma)\|_\infty d\sigma.
\end{equation}

Note that
$$
\begin{array}{rl}
{\mathbf S}_\gamma(t-\sigma)(|\cdot|+\sqrt{\sigma})^{{\gamma}}&=[4\pi (t-\sigma)]^{-N/2}\displaystyle\int_{\mathbb{R}^N} \exp \left[-\frac{|x-y|^2}{4(t-\sigma)} \right] \left(1+ \frac{\sqrt{\sigma}}{|y|}\right)^{\gamma}dy,\\
\end{array}
$$
and therefore, from Lemma \ref{Pinsky},
$$
\begin{array}{ll}
\|{\mathbf{S}}_\gamma (t-\sigma)(|\cdot|+\sqrt{\sigma})^\gamma\|_\infty&= \left[ {\mathbf{S}}_\gamma (t-\sigma)(|\cdot|+\sqrt{\sigma})^\gamma \right] (0)\\
&=[4\pi (t-\sigma)]^{-N/2} \displaystyle\int_{\mathbb{R}^N} \exp \left[ -\frac{|y|^2}{4(t-\sigma)}\right] \left( 1+\frac{\sqrt{\sigma}}{|y|}\right)^{\gamma} dy\\
&=(4\pi)^{-N/2} \displaystyle\int_{\mathbb{R}^N} \exp\left(-|y|^2/4\right)\left( 1+ \frac{\sqrt{\sigma}}{\sqrt{t-\sigma}|y|} \right)^{\gamma}{dy}.
\end{array}
$$
To estimate this last integral we argue as follows. For $|y|\geq 1$ and $0<\sigma<t$, we have $$\left( 1+ \frac{\sqrt{\sigma}}{\sqrt{t-\sigma}|y|}\right)^{\gamma}\leq (t-\sigma)^{-\gamma/2} (\sqrt{t-\sigma}+ \sqrt{\sigma})^{\gamma}\leq (2t)^{\gamma/2}(t-\sigma)^{-\gamma/2},$$
while for  $|y|\leq 1$,
$$\left( 1+ \frac{\sqrt{\sigma}}{\sqrt{t-\sigma}|y|}\right)^{\gamma}\leq |y|^{-\gamma}(t-\sigma)^{-\gamma/2} (\sqrt{t-\sigma}+ \sqrt{\sigma})^{\gamma}\leq |y|^{-\gamma} (2t)^{\gamma/2}(t-\sigma)^{-\gamma/2}.$$
Therefore,
$$
\|{\mathbf S}_\gamma (t-\sigma)(|\cdot|+\sqrt{\sigma})^{\gamma}\|_{\infty} \leq \eta_2 \;  t^{\gamma/2}(t-\sigma)^{-\gamma/2},
$$
where 
\begin{equation}\label{eta-2}
\begin{array}{lll}
\eta_2&=&\eta_2(\gamma)\\
&=& (4\pi)^{-N/2}2^{\gamma/2} \left [ \displaystyle\int_{|y|\geq 1}  \exp \left(-\frac{|y|^2}{4} \right)dy+ \displaystyle\int_{|y|\leq 1} \exp \left( -\frac{|y|^2}{4}\right) |y|^{-\gamma}dy \right].
\end{array}
\end{equation}
Inserting this in \eqref{Mar.z} we obtain
$$
\|z(t)\|_{\infty} \leq q [(1-q) \eta_0 ]^{-1} \eta_2 \; t^{\gamma/2}\displaystyle\int_0^t \sigma^{-1}(t-\sigma)^{-\gamma/2}\|z(\sigma)\|_{\infty}d\sigma.
$$
 Using estimate \eqref{Cec.un}, we have
$$
\begin{array}{rl}
\|z(t)\|_{\infty}&\leq  q [(1-q) \eta_0]^{-1}\eta_2 \; t^{\gamma/2} \left (\frac{2\eta_1}{2-\gamma} \right )^{1/(1-q)} \displaystyle\int_0^t \sigma^{-1+ (2-\gamma)/2(1-q)} (t-\sigma)^{-\gamma/2}d\sigma\\
&=q [(1-q) \eta_0 ]^{-1} \eta_2 \left (\frac{2\eta_1}{2-\gamma} \right)^{1/(1-q)} t^{(2-\gamma)/2(1-q)} \displaystyle\int_0^1 \sigma^{-1+ (2-\gamma)/2(1-q)} (1-\sigma)^{-\gamma/2}d\sigma\\
&=q [ (1-q) \eta_0]^{-1} \eta_2 \beta(\gamma) \left (\frac{2\eta_1}{2-\gamma} t^{(2-\gamma)/2}\right)^{1/(1-q)},
\end{array}
$$
where $\eta_1$ is given in \eqref{gamma-1} and 
\begin{equation}
\label{beta-gamma}
\beta(\gamma)=\displaystyle\int_0^1 \sigma^{-1+ (2-\gamma)/2(1-q)} (1-\sigma)^{-\gamma/2}d\sigma.
\end{equation}
 Arguing similarly $k$ times, we obtain
\begin{equation}
\label{uniq-conc}
\| z(t)\|_\infty \leq \left(\Lambda(\gamma)\right)^k\,\left (\frac{2\eta_1}{2-\gamma} t^{(2-\gamma)/2}\right)^{1/(1-q)},
\end{equation}
where
$$
\Lambda(\gamma)=\frac{q\beta(\gamma)\eta_2(\gamma)}{(1-q)\eta_0(\gamma)},
$$
and $\eta_0(\gamma),\, \eta_2(\gamma),\, \beta(\gamma)$  are respectively given by \eqref{eta-0}, \eqref{eta-2} and \eqref{beta-gamma}.
Note that one can easily verify using the Lebesgue theorem that $\Lambda(\gamma)\to q<1$ as $\gamma\to 0$. Letting $k\to \infty$ in \eqref{uniq-conc} yields the desired result for $\gamma>0$ being sufficiently small.
This finishes the proof of Lemma \ref{Lem.PC}, thereby concluding the proof for Theorem \ref{UN}.
\end{proof}
\begin{remark}
{\rm It is not clear how to remove the assumption of smallness on $\gamma$ to obtain uniqueness. However, we conjecture that uniqueness holds across the entire range of $0 \leq \gamma < \gamma_N=\min\{2, N\}$.}
\end{remark}

\section{{Conclusion and Open Problems}}

{In this work, we established the existence and uniqueness of nonnegative global solutions for the nonlinear heat equation with an inhomogeneous concave nonlinearity in $\mathbb{R}^{N}$. Our results extend previous ones by considering the case where the nonlinearity is influenced by a singular potential $|x|^{-\gamma}$, with $0 < \gamma < \gamma_N$. We proved the existence of solutions for initial data in $C_0(\mathbb{R}^{N})$ and provided a lower bound for non-trivial solutions, which was instrumental in establishing the uniqueness for small values of $\gamma$.}

{Several open problems remain. Although we established uniqueness for small $\gamma$, it remains unclear whether uniqueness holds for all $0 < \gamma < \gamma_N$. A deeper understanding of the behavior of solutions as $\gamma$ approaches $\gamma_N$ is needed. Furthermore, the regularity of solutions, especially near the singularity at $x = 0$, is an interesting direction for future research. Understanding how the singularity affects the smoothness of solutions could provide further insight into the structure of the problem. Investigating the long-time behavior of solutions, including convergence to steady states or blow-up phenomena, would also be valuable. This could involve studying the influence of the singular potential on the decay rates of solutions. Finally, extending the results to more general nonlinearities or to other types of singular potential could broaden the applicability of the methods developed in this work.}

\vspace{0.1cm}

\hrule

\vspace{0.1cm}

\noindent{\bf\large Acknowledgements.} {{\em 
The authors would like to express their sincere thanks to the Editor for the careful and efficient handling of the manuscript. They are also deeply grateful to the anonymous Referees for their meticulous review and valuable suggestions that have greatly improved the quality of this work.
Miguel Loayza was partially supported by CAPES-PRINT, 88881.311964/2018-01, MATHAMSUD, 88881.520205/2020-01, CNPq - 313382/2023-9 .}}\\

 \hrule

\vspace{0.1cm}

{\noindent{\bf\large Declarations.}}
On behalf of all authors, the corresponding author states that there is no conflict of interest. No data-sets were generated or analyzed during the current study.

\vspace{0.1cm}

\hrule 


\begin{thebibliography}{10}
\bibitem{AE87} J. Aguirre and M. Escobedo, {\em A Cauchy problem for $u_t-\Delta u= u^p$ with $0<p<1$. Asymptotic behaviour of solutions}, Ann. Fac. Sci. Toulouse, Math., {\bf 8} (1986), {175--203}.

\bibitem{Amann} {H. Amann}, {\em Linear and quasilinear parabolic problems. {V}ol. {I}}, {Monographs in Mathematics} Vol. {\bf 89}, {Birkh\"auser Boston, Inc., Boston, MA}, {1995}.

\bibitem{R1} P. Baras and J. A. Goldstein, {\em The heat equation with a singular potential}, Trans. Amer. Math. Soc., {\bf 284} (1984), 121--139.

\bibitem{BC} H. Brezis and  T. Cazenave, {\em A nonlinear heat equation with singular initial data}, J. Anal. Math., {\bf 68} (1996), 277--304.

\bibitem{SW} B. Ben Slimene, S. Tayachi and  F. B. Weissler, {\em Well-posedness, global existence and large time behavior for Hardy-H\'enon parabolic equations}, Nonlinear Anal., {\bf 152} (2017), 116--148.

\bibitem{R2} B. Carhuas-Torre, R. Castillo and M. Loayza, {\em The Hardy parabolic problem with initial data in uniformly local Lebesgue spaces}, J. Differ. Equations, {\bf 424} (2025), 438--462.
\bibitem{R3} R. Castillo, O. Guzman-Rea and M. Loayza, {\em On the local existence for Hardy parabolic equations with singular initial data}, J. Math. Anal. Appl., {\bf 510} (2022), 126022.

\bibitem{CazHar} {T. Cazenave and A. Haraux}, {\em An introduction to semilinear evolution equations}, {Oxford Lecture Series in Mathematics and its Applications}, Vol. {\bf 13}, {The Clarendon Press, Oxford University Press, New York}, {1998}.

\bibitem{CIT} N. Chikami, M. Ikeda and K. Taniguchi, {\em Well-posedness and global dynamics for the critical Hardy-Sobolev parabolic equation}, Nonlinearity, {\bf 34} (2021), 8094--8142.

\bibitem{CITT} N. Chikami, M. Ikeda, K. Taniguchi and S. Tayachi, {\em Unconditional uniqueness and non-uniqueness for Hardy-H\'enon parabolic equations}, Math. Ann., {\bf 390} 
 (2024), 3765--3825.

\bibitem{R4} J. Cholewa and A. Rodriguez-Bernal, {\em Self-similarity in homogeneous stationary and evolution problems}, J. Evol. Equ., {\bf 23} (2023), Paper No. 42, 39 pp.

\bibitem{DM1986} J. Dixon and S. Mckee, {\em Weakly singular discrete Gronwall inequalities}, Z. Angew. Math. Mech., {\bf 66} (1986), 535--544.

\bibitem{Evans} {Lawrence C. Evans}, {\em Partial differential equations}, {2nd ed.}, {Grad. Stud. Math.}, Vol, {\bf 19}, (2010), {Providence, RI: American Mathematical Society (AMS)}.

\bibitem{Watanabe} H. Fujita and S. Watanabe, {\em On the uniqueness and non‐uniqueness of solutions of initial value problems for some quasi‐linear parabolic equations}, Comm.  Pure  Appl. Math., {\bf 21} (1968), 631--652.

\bibitem{Henry} {D. Henry}, {\em Geometric theory of semilinear parabolic equations}, {Lecture Notes in Mathematics}, Vol. {\bf 840}, {Springer-Verlag, Berlin-New York}, {1981}.

\bibitem{HT2001} D. Hirata and M. Tsutsumi, {\em On the well-posedness of a linear heat equation with a critical singular potential}, Differential and Integral Equations, {\bf 14} (2001), 1--18.


\bibitem{Hu} {B. Hu},  {\em Blow Up Theories for Semilinear Parabolic Equations}, Lecture Notes in Mathematics, Vol. 2018, Berlin: Springer, 2011.

\bibitem{Loa2006} M. Loayza, {\em The heat equation with singular nonlinearity and singular initial data}, J. Differential equations, {\bf 229} (2006), 509--528.
\bibitem{Pinsky} R. G. Pinsky,   {\em Existence and Nonexistence of Global Solutions for $u_t= \Delta u+ a (x) u^p$ in $\R^d$}, J. Differential Equations, {\bf 133} (1997), 152--177.

\bibitem{QS} {P. {Quittner} and P. {Souplet}}, {\em Superlinear parabolic problems. Blow-up, global existence and steady states}, 2nd revised and updated edition, {{Birkh\"auser Adv. Texts, Basler Lehrb\"ucher}}, {2019}.

\bibitem{Tayachi} S. Tayachi, {\em Uniqueness and non-uniqueness of solutions for critical Hardy-H\'enon parabolic equations}, J. Math. Anal. Appl., {\bf 488} (2020), 123976, 51 pp.

\bibitem{Webb2019}{J. R. L. Webb}, {\em Weakly singular {G}ronwall inequalities and applications to fractional differential equations}, {J. Math. Anal. Appl.}, {\bf 471} (2019), {692--711}.
              
\bibitem{Weissler} F. B. Weissler, {\em Local existence and nonexistence for semilinear parabolic equations in {$L^p$}}, {Indiana Univ. Math. J.}, {\bf 29} (1980), {79--102}.

\bibitem{Weissler2} F. B. Weissler, {\em Semilinear evolution equations in Banach spaces}, {J. Funct. Anal.}, {\bf 32} (1979), {277--296}.
\end{thebibliography}
\end{document}